\documentclass[11pt]{article}
\usepackage{caption}
\usepackage{epsf,epsfig,amsfonts,amsgen,amsmath,amstext,amsbsy,amsopn,amsthm
}
\usepackage{ebezier,eepic}
\usepackage{color}
\usepackage{multirow}
\usepackage{mathrsfs}
\usepackage{tikz}
\usepackage{enumerate}
\usepackage{enumitem}
\usepackage{url}
\usepackage{pgf,tikz,pgfplots}
\usepackage{mathrsfs}
\usepackage{amssymb}

\usetikzlibrary{arrows}

\oddsidemargin 0pt
\evensidemargin 0pt
\marginparwidth 40pt
\topmargin 0pt
\headsep 20pt
\tolerance=1000
\textheight 8.8in
\textwidth 6.6in

\newtheorem{dfn}{Definition}[section]
\newtheorem{thm}[dfn]{Theorem}
\newtheorem{lem}[dfn]{Lemma}

\def\CN{\mathrm{CN}}
\def\ex{\mathrm{ex}}

\let\svthefootnote\thefootnote
\newcommand\blankfootnote[1]{%
	\let\thefootnote\relax\footnotetext{#1}%
	\let\thefootnote\svthefootnote%
}

\begin{document}

\title{
A hypergraph bipartite Tur\'an problem with odd uniformity}

\author{
Jie Ma\thanks{School of Mathematical Sciences, University of Science and Technology of China, Hefei, Anhui, 230026, China.
Research supported by National Key Research and Development Program of China 2023YFA1010201, National Natural Science Foundation of China grant 12125106, and Anhui Initiative in Quantum Information Technologies grant AHY150200. Email: jiema@ustc.edu.cn.}
~~~~
Tianchi Yang\thanks{Department of Mathematics, National University of Singapore, 119076, Singapore.
Research supported by Professor Hao Huang's start-up grant at NUS and an MOE Academic Research Fund (AcRF) Tier 1 grant. Email: tcyang@nus.edu.sg.
}
}

\date{}


\maketitle
\begin{abstract}
In this paper, we investigate the hypergraph Tur\'an number $\ex(n,K^{(r)}_{s,t})$.
Here, $K^{(r)}_{s,t}$ denotes the $r$-uniform hypergraph with vertex set $\left(\cup_{i\in [t]}X_i\right)\cup Y$ and edge set $\{X_i\cup \{y\}: i\in [t], y\in Y\}$, where $X_1,X_2,\cdots,X_t$ are $t$ pairwise disjoint sets of size $r-1$ and $Y$ is a set of size $s$ disjoint from each $X_i$.
This study was initially explored by Erd\H{o}s and has since received substantial attention in research.
Recent advancements by Brada\v{c}, Gishboliner, Janzer and Sudakov have greatly contributed to a better understanding of this problem.
They proved that $\ex(n,K_{s,t}^{(r)})=O_{s,t}(n^{r-\frac{1}{s-1}})$ holds for any $r\geq 3$ and $s,t\geq 2$.
They also provided constructions illustrating the tightness of this bound if $r\geq 4$ is {\it even} and $t\gg s\geq 2$.
Furthermore, they proved that $\ex(n,K_{s,t}^{(3)})=O_{s,t}(n^{3-\frac{1}{s-1}-\varepsilon_s})$ holds for $s\geq 3$ and some $\epsilon_s>0$.
Addressing this intriguing discrepancy between the behavior of this number for $r=3$ and the even cases,
Brada\v{c} et al. post a question of whether  
\begin{equation*}
\mbox{$\ex(n,K_{s,t}^{(r)})= O_{r,s,t}(n^{r-\frac{1}{s-1}- \varepsilon})$ holds for {\it odd} $r\geq 5$ and any $s\geq 3$.}
\end{equation*}

In this paper, we provide an affirmative answer to this question, utilizing novel techniques to identify regular and dense substructures.
This result highlights a rare instance in hypergraph Tur\'an problems where the solution depends on the parity of the uniformity.
\end{abstract}

\section{Introduction}
For a given $r$-uniform hypergraph $H$, we say an $r$-uniform hypergraph is {\it $H$-free} if it does not contain a copy of $H$ as its subgraph.
The {\it Tur\'an number} $\ex(n,H)$ denotes the maximum number of edges in an $H$-free $r$-uniform hypergraph on $n$ vertices.
The study of Tur\'an number is a central problem in extremal combinatorics.
We refer interested readers to the survey by F\"uredi and Simonovits \cite{FS-survey} for ordinary graphs and
the survey by Keevash \cite{Kv11} for non-$r$-partite $r$-uniform hypergraphs.
Here, our focus lies on the Tur\'an numbers of $r$-partite $r$-uniform hypergraphs $H$ for $r\geq 3$.
A fundamental result proved by Erd\H{o}s states that for every such $H$,
$\ex(n,H)=O(n^{r-\varepsilon_H})$ holds for some $\varepsilon_H>0$.
The primary objective of this aspect is to determine the optimal constant $\varepsilon_H$.
However, this problem is notoriously difficult and to date, there are very few cases that have been fully understood.

In this paper, we consider the Tur\'an number of the following $r$-partite $r$-uniform hypergraphs,
which were initially defined by Mubayi and Verstra\"ete \cite{MuVe04}:
for positive integers $r,s,t$, let $K^{(r)}_{s,t}$ denote the $r$-uniform hypergraph with vertex set $\left(\cup_{i\in [t]}X_i\right)\cup Y$ and edge set $\{X_i\cup \{y\}: i\in [t], y\in Y\}$,
where $X_1,X_2,\cdots,X_t$ are $t$ pairwise disjoint sets of size $r-1$ and $Y$ is a set of size $s$ disjoint from each $X_i$.
This study can be traced back to an old problem posted by Erd\H{o}s \cite{E77},
who asked to determine the maximum number $f_r(n)$ of edges in an $r$-uniform hypergraph on $n$ vertices that does not contain four distinct edges
$A, B, C, D$ satisfying $A\cup B=C\cup D$ and $A\cap B=C\cap D=\emptyset$.
This generalizes the Tur\'an number of the four-cycle, and it is evident to see that $f_3(n)=\ex(n,K^{(3)}_{2,2})$ and $f_r(n)\le \ex(n,K^{(r)}_{2,2})$ for any $r\geq 4$.
F\"uredi \cite{Fu84} resolved a conjecture of Erd\H{o}s made in \cite{E77}, by showing that $f_r(n)\leq 3.5\binom{n}{r-1}$ for any $r\geq 3$.
This was first improved by Mubayi and Verstra\"ete \cite{MuVe04}, and later on,
further improvements were made by Pikhurko and Verstra\"ete \cite{PiVe09}.

Returning to the Tur\'an number $\ex(n,K^{(r)}_{s,t})$,
Mubayi and Verstra\"ete \cite{MuVe04} primarily focus on the case $r=3$.
They proved that $\ex(n,K^{(3)}_{s,t})=O_{s,t}(n^{3-1/s})$ for $t\ge s\ge 3$ and $\ex(n,K^{(3)}_{s,t})=\Omega_t(n^{3-2/s})$ for $t>(s-1)!$.
For the particular case $s=2$, Mubayi and Verstra\"ete \cite{MuVe04} also provided that $\ex(n,K^{(3)}_{2,t})\leq t^4\binom{n}{2}$ for $t\ge 3$, and they further posed the question of determining the order of the magnitude of the leading coefficient in terms of $t$.  
Among other results, Ergemlidze, Jiang and Methuku \cite{EJM20} obtained an improvement by showing $\ex(n,K^{(3)}_{2,t})\leq (15t\log t+40t)\binom{n}{2}$, which can be extended to all $r\geq 3$.
Using the random algebraic method (see \cite{Bukh15,BD}), Xu, Zhang and Ge \cite{XZG20,XZG21} proved that $\ex(n,K^{(r)}_{s,t})=\Theta(n^{r-1/t})$, assuming that $s$ is sufficiently large than $r,t$.

Very recently, Brada\v{c}, Gishboliner, Janzer and Sudakov \cite{BGJS23} made significant contributions towards
a better understanding of the behavior of the Tur\'an number $\ex(n,K^{(r)}_{s,t})$.
Using a novel variant of the dependent random choice,
they proved a general upper bound that $\ex(n,K^{(r)}_{s,t})=O_s\left(t^{\frac{1}{s-1}}n^{r-\frac{1}{s-1}}\right)$ holds for any $r\geq 3$ and $s,t\geq 2$.
Moreover, they built upon norm graphs (\cite{ARS99,KRS96}) and provided matching constructions, which led to $$\ex(n,K^{(r)}_{s,t})=\Theta_{r,s}\left(t^{\frac{1}{s-1}}n^{r-\frac{1}{s-1}}\right)\mbox{ for any {\it even} $r\geq 4$, $s\geq 2$, and $t>(s-1)!$.}$$
Furthermore, they derived a different order of magnitude for $n$ in the case $r=3$ by proving that
$$\ex(n,K_{s,t}^{(3)})=O_{s,t}\left(n^{3-\frac{1}{s-1}- \varepsilon_s}\right) \mbox{ holds for any $s\ge 3$, $t$, and some positive constant $\varepsilon_s=O(s^{-5})$}.$$
Brada\v{c} et al. posed the question of whether the above upper bound can be extended to all odd uniformities.
They noted that if the question is affirmative, then ``this would be a rare example of an extremal problem where the answer depends on the parity of the uniformity'', quoted from \cite{BGJS23}.

In this work, we provide a positive answer to the aforementioned question posed by Brada\v{c} et al. Our main result can be stated as follows.

\begin{thm}\label{thm: main}
For any odd $r\geq 3$ and any $s\geq 3$, there exists some $\varepsilon=\varepsilon(s)>0$ depending only on $s$ such that for any positive integer $t$, $$\ex(n,K_{s,t}^{(r)})=O_{r,s,t}\left(n^{r-\frac{1}{s-1}- \varepsilon}\right).$$
\end{thm}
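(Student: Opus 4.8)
Write $r=2\ell+1$ with $\ell\ge 1$; at $\ell=1$ (i.e.\ $r=3$) the statement is the theorem of Brada\v c, Gishboliner, Janzer and Sudakov, and the argument below will specialize to theirs there. Suppose for contradiction that $G$ is a $K^{(r)}_{s,t}$-free $r$-graph on $n$ vertices with $e(G)\ge Cn^{r-\frac1{s-1}-\varepsilon}$, where $C=C(r,s,t)$ is large and $\varepsilon=\varepsilon(s)>0$ is a small constant fixed at the end; it will be of the same order $s^{-O(1)}$ as the constant $\varepsilon_s$ of \cite{BGJS23}, and in particular independent of $r$. For an $(r-1)$-set $F$ put $N^\ast(F)=\{v:F\cup\{v\}\in E(G)\}$; a maximal‑matching / vertex‑cover argument turns $K^{(r)}_{s,t}$-freeness into the statement that for every $s$-set $Y\subseteq V(G)$ the family $\{F:Y\subseteq N^\ast(F),\ F\cap Y=\emptyset\}$ has no matching of size $t$, hence is covered by at most $(r-1)(t-1)$ vertices. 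First I would clean $G$: by dyadic pigeonholing on vertex degrees, on codegrees of $j$-sets for each $2\le j\le r-1$, and by the usual iterated removal of low‑degree vertices, pass to $G'\subseteq G$ on $n'\ge n^{1-o(1)}$ vertices with $e(G')\ge\tfrac12 Cn'^{\,r-\frac1{s-1}-\varepsilon}$ in which almost all edges lie on $(r-1)$-sets of a single codegree scale $\Theta(D)$, the codegrees of $\ell$-sets and of $(2\ell)$-sets are near‑uniform on their supports, and $G'$ is near‑regular. Since the number $M$ of active $(r-1)$-sets obeys $MD=\Theta_r(e(G'))$ and $M\le n'^{\,r-1}$, this forces $D\ge n'^{\,1-\frac1{s-1}-O(\varepsilon)}$ — a large codegree scale we will exploit.

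\textbf{The parity fold.} Because $r-1=2\ell$ is \emph{even}, split each active $(r-1)$-set $F$ into an ordered pair of disjoint $\ell$-sets in all $\binom{2\ell}{\ell}$ ways, and let $\mathcal B$ be the bipartite graph whose two sides are copies of $\binom{V(G')}{\ell}$ and in which $(P,Q)$ is an edge whenever $P\cup Q$ is active, each edge carrying the target set $N^\ast(P\cup Q)$ of size $\Theta(D)$. Then $e(\mathcal B)=\Theta_r(M)$, the two sides have equal order $N_0=\binom{n'}{\ell}$, and after Step~1 the graph $\mathcal B$ is near‑regular of degree $\Theta(M/N_0)$. The forbidden‑configuration information transports to $\mathcal B$: for every $s$-set $Y\subseteq V(G')$ the edges of $\mathcal B$ whose target set contains $Y$ number $O_{r,t}(n'^{\,r-2})$, and more usefully each such edge has an endpoint meeting a fixed set of at most $(r-1)(t-1)$ vertices of $V(G')$. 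This fold is exactly what is special to odd $r$: for even $r$ the only fold is into $\ell$-sets versus $(\ell-1)$-sets, with sides of very different sizes, and the tight constructions of \cite{BGJS23} (built on norm graphs) exploit precisely that imbalance, so there $n^{r-\frac1{s-1}}$ cannot be improved — whereas a \emph{balanced} bipartite host admits an extra polynomial saving, as in the $r=3$ case.

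\textbf{The bipartite argument and the contradiction.} On the balanced host $\mathcal B$ I would run the method of \cite{BGJS23}: a variant of dependent random choice locates a regular and dense substructure — subsets $U,U'$, one in each side, inducing an almost‑regular bipartite graph of some density $\delta$ in which almost every $(s-1)$-subset of a side has $\Omega(\delta^{\,s-1}|U'|)$ common neighbours. Combining this with the size-$\Theta(D)$ target sets attached to the edges, one greedily reads off $s$ vertices $y_1,\dots,y_s\in V(G')$ together with $t$ pairwise \emph{disjoint} active $(r-1)$-sets $X_1,\dots,X_t$ with $\{y_1,\dots,y_s\}\subseteq N^\ast(X_i)$ for all $i$, i.e.\ a copy of $K^{(r)}_{s,t}$ in $G'\subseteq G$ — a contradiction. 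The bookkeeping is arranged so that the codegree scale $D\ge n'^{\,1-\frac1{s-1}-O(\varepsilon)}$ forced in Step~1 sits just above the threshold at which this balanced count closes up; taking $\varepsilon=\varepsilon(s)$ small enough (comparable to $\varepsilon_s$) makes the chain of inequalities go through, proving the theorem. Because the saving is extracted entirely from the folded graph problem, $\varepsilon$ depends only on $s$, with all dependence on $r$ absorbed into $C$ and the implied constants.

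\textbf{Main obstacle.} The heart of the difficulty — and where the odd case is genuinely harder than $r=3$ — is preserving \emph{disjointness} while unfolding. Distinct $\ell$-sets need not be disjoint as subsets of $V(G')$, so a matching in $\mathcal B$ does not by itself supply vertex‑disjoint $(r-1)$-sets, yet a copy of $K^{(r)}_{s,t}$ demands pairwise disjoint $X_i$. One must therefore interleave the dependent‑random‑choice selection of Step~3 with a greedy ``spreading'' step that keeps the chosen $\ell$-sets pairwise disjoint while losing only an $O_{r,s,t}(1)$ factor in each relevant density, and crucially without that loss eating the polynomial margin $n^{-\varepsilon}$. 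Controlling this interaction between the disjointness requirement and the regularity needed for the counting is the technical core of the proof; a secondary, more routine, obstacle is carrying out the clean‑up of Step~1 simultaneously across all codegree scales $j\le r-1$ so that $\mathcal B$ is as structured as the argument of \cite{BGJS23} requires.
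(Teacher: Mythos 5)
Your proposal does not follow the paper's route, and as written it has a genuine gap at its central step. The paper's parity mechanism is global and combinatorial: it builds an auxiliary digraph on the parts $V_1,\dots,V_r$ (an edge $V_i\to V_j$ recording that many $s$-sets in $V_j$ are ``rooted'' on vertices of $V_i$ inside a dense, almost-regular subgraph), proves that every part has positive in-degree (Lemma~\ref{lem: dense subgraph}) and that no directed path $V_i\to V_j\to V_k$ with three distinct parts exists (Lemma~\ref{lem: 1root2root3}), and concludes that the digraph must be a disjoint union of $2$-cycles covering all $r$ parts --- impossible for odd $r$. Your mechanism instead rests on the claim that because $r-1=2\ell$ is even, one can fold active $(r-1)$-sets into a \emph{balanced} bipartite graph $\mathcal B$ on two copies of $\binom{V}{\ell}$ and then ``run the method of [BGJS]'' there. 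This step is not an argument, and there are concrete reasons to doubt it. First, your own Step~1 forces $D\ge n^{1-\frac1{s-1}-O(\varepsilon)}$ precisely because $M\le n^{r-1}$; hence $M=e(G')/\Theta(D)$ is within $n^{O(\varepsilon)}$ of $n^{r-1}=N_0^2$, so $\mathcal B$ is nearly complete bipartite. Dependent random choice applied to a nearly complete balanced bipartite graph yields no nontrivial ``regular dense substructure'' beyond what is already there; all of the actual content lives in how the target sets $N^\ast(P\cup Q)$ intersect across edges of $\mathcal B$, and your Step~3 treats exactly that as a black box (``one greedily reads off $s$ vertices together with $t$ pairwise disjoint $X_i$''). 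Second, the motivating dichotomy --- balanced fold implies an extra saving, unbalanced fold implies tightness --- is not substantiated and does not match the known obstruction: for odd $r$ one can equally fold into $(\ell+1)$- and $(\ell-1)$-sets, and for even $r$ the tight norm-graph construction of \cite{BGJS23} is organized around a perfect pairing of the $r$ parts, not around a cardinality imbalance of sub-tuples of an edge. Third, you correctly flag the disjointness problem (overlapping $\ell$-sets) as the technical core but leave it unresolved.

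In short, the proposal correctly senses that parity of $r$ must enter and that one should first regularize codegrees (your Step~1 is close in spirit to Lemma~\ref{lem: find regular subgraph}), but the decisive counting --- the analogue of the paper's ``no directed path of length two,'' which is proved by comparing a lower bound of $(|Y||Z|^s)^{1-O(\varepsilon)}$ on root--set incidences against the upper bound $O_{r,t}(|Z|^s)$ coming from \eqref{equ:roots} --- is missing from your outline and is not supplied by the balanced-fold heuristic. To repair the proposal you would need to either carry out the BGJS-type analysis on $\mathcal B$ with the target sets genuinely incorporated (and handle the overlap/disjointness loss quantitatively), or switch to a mechanism, like the paper's rooting digraph, in which the odd-$r$ obstruction is an actual parity contradiction rather than an analogy with the $r=3$ case.
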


We use a different proof approach from \cite{BGJS23} (see Section~\ref{subsec:sketch} for an outline of the proof).
The core ideas are to find some regular and dense substructures in hypergraphs.  
Our proof works for $\varepsilon(s)= \frac{1}{6(s+2)^2}$, although we have made no serious attempt to optimize the leading coefficient.
This improves the choice of $\varepsilon_s=O(s^{-5})$ in \cite{BGJS23} for the case $r=3$.

\medskip

The remainder of the paper is organized as follows.
In Section 2, we introduce the necessary notation and provide an outline of the proof for Theorem \ref{thm: main}.
In Section 3, we break down the proof of Theorem \ref{thm: main} into three lemmas.
The full proofs of these lemmas are presented in Section 4.
Finally, in Section 5, we offer some concluding remarks.

\section{Preliminaries}
In this section, we begin by introducing the necessary notation, followed by providing a preliminary outline of the proof for our main result, namely Theorem~\ref{thm: main}.

\subsection{Notation}
Let $r\ge 3$, $s\ge 3$, $t$ and $n$ be positive integers throughout the rest of this paper. Let $[n]=\{1,2,\cdots,n\}$.

Assume that $\mathcal G$ is an $r$-partite $r$-uniform hypergraph with parts $V_1, \cdots , V_r$, each of size $n$, throughout this section.
For a given vertex $v$, the {\it link hypergraph} of $v$, denote as $N_{\mathcal G}(v) $, comprises all $(r-1)$-sets that, when combined with $v$, form an edge in $\mathcal G$.
Similarly, for $k<r$ and a set $T$ with $k$ vertices, we write $N_{\mathcal G}(T)$ for the $(r-k)$-uniform hypergraph containing all $(r-k)$-sets which together with $T$ form an edge in $\mathcal G$.
We denote its cardinality as $d_{\mathcal G}(T)=| N_{\mathcal G}(T)|$.
Specially if $ d_{\mathcal G}(T)\neq 0$, we say the $k$-set $T$ is a {\it $k$-tuple} of $\mathcal G$.
We define the set of all $(r-1)$-tuples of $\mathcal G$ as $\mathcal T(\mathcal G)$.
For $i\in [r]$, let $\mathcal T_i(\mathcal G)$ be the set of $(r-1)$-tuples of $\mathcal G$ contained in $V(\mathcal G)\backslash V_i$.

For an $s$-set $S=\{v_1,v_2\cdots,v_{s} \} \subseteq V(\mathcal G)$, we define $\CN_{\mathcal G}(S) $ as the set of common edges in all link hypergraphs of vertices $v_i\in S$. That is, $\CN_{\mathcal G}(S)=\bigcap_{i\in [s]} N_{\mathcal G}(v_i)$.
A {\it vertex cover} of a set $\mathcal{E}$ of edges is a set of vertices that intersects with every edge in $\mathcal{E}$.
For each $s$-set $S$, we choose and fix a minimum vertex-cover of $\CN_{\mathcal G}(S)$, and call these vertices the {\it roots} of $S$.
For a root $v$ of $S$, we also say $S$ is rooted on $v$.
The following simple yet crucial property will be repeatedly used in the proofs:
\begin{equation}\label{equ:roots}
\mbox{If $\mathcal G$ is $K_{s,t}^{(r)}$-free, then any $s$-set $S\subseteq V(\mathcal{G})$ has less than $rt$ roots.}
\end{equation}
To see this, consider a maximum set of disjoint edges in $\CN_{\mathcal G}(S)$, and let $A$ be the vertex set of these edges.
Due to the maximality, every edge in $\CN_{\mathcal G}(S)$ contains at least one vertex in $A$. So $A$ is a vertex-cover of $\CN_{\mathcal G}(S)$.
Since $\mathcal G$ is $K_{s,t}^{(r)}$-free, $\CN_{\mathcal G}(S)$ has at most $t-1$ disjoint edges. Therefore, $|A|\le (t-1)(r-1)<tr$, as desired.

Let $S$ be an $s$-set in $V(\mathcal G)$.
We denote $cd_{\mathcal G}(S)=|\CN_{\mathcal G}(S)|$ to be the {\it codegree} of $S$ in $\mathcal G$.
For a vertex $u\notin S$, we write $cd_{\mathcal G}(S|u)$ for the number of edges in $\CN_{\mathcal G}(S)$ containing $u$.
It is clear that $cd_{\mathcal G}(S)\leq \sum_{u} cd_{\mathcal G}(S|u),$ where the summation is over all roots $u$ of $S$.

\subsection{Proof sketch}\label{subsec:sketch}
In this concise overview of the proof for Theorem~\ref{thm: main}, we outline crucial intermediate properties and emphasize the differences between the cases when $r$ is odd or even.

Consider $\mathcal G$ as a $K_{s,t}^{(r)}$-free $r$-partite $r$-uniform hypergraph with parts $V_1, \cdots , V_r$, each of size $n$, and possessing at least $n^{r-1/(s-1)-\epsilon}$ edges, where $\epsilon>0$ is a small constant.
First, we demonstrate that $\mathcal G$ can be assumed to be ``regular'' in the sense that every $(r-1)$-tuple has bounded degree.
This regularity property is proven in Lemma~\ref{lem: find regular subgraph} and simplifies the subsequent analysis.

The key ideas of the proof culminate in an auxiliary digraph $D(\mathcal G)$, where the vertex set is $\{V_1, \cdots , V_r\}$, and directed edges $V_i \rightarrow V_j$ are formed for distinct $i,j$ if there is a significant number of $s$-sets in $V_j$ rooted on vertices in $V_i$ within a relatively dense subgraph of $\mathcal G$ (refer to Definition~\ref{Def:delta-dense} for a precise description).
The main body of the proof is then divided into the following two properties, which are established in Lemmas \ref{lem: dense subgraph} and \ref{lem: 1root2root3}, respectively:
\begin{itemize}
\item[(I).] Every vertex in $D(\mathcal G)$ has non-zero in-degree, and
\item[(II).] There exist no three distinct vertices forming a directed path $V_i \rightarrow V_j\rightarrow V_k$ in $D(\mathcal G)$.
\end{itemize}
The proof of Property (II) is the most involved.
In essence, if $V_i \rightarrow V_j$ holds, it can be shown that there exist large subsets $Y\subseteq V_j$ and $Z\subseteq V_k$ for $k\notin \{i,j\}$ such that for any $y\in Y$,
the projection of $N_{\mathcal{G}}(y)$ onto $Z$ is nearly complete (see Lemma~\ref{lem: reduce to bipartite} in more details).
If $V_j\rightarrow V_k$ also holds for some $k\notin \{i,j\}$, then the number of pairs $(S,y)$ where $y\in Y$ is a root of an $s$-set $S\subseteq Z$ can be shown to be at least $(|Y||Z|^s)^{1-O(\epsilon)}$. However, due to \eqref{equ:roots}, the number of such pairs $(S,y)$ is at most $O_{r,t}(|Z|^s)$. This would lead to a contradiction and establish Property (II).

Now we can distinguish between the cases when $r$ is odd or even.
If $r$ is even, using Properties (I) and (II), one can conclude that $D(\mathcal G)$ must be isomorphic to the union of 2-cycles, say $ V_{2i-1} \leftrightarrows V_{2i}$ for $1\le i\le r/2$. This configuration is feasible, as justified in the construction in \cite{BGJS23}.
However, if $r$ is odd, Property (I) would force the existence of a directed path of length two say $V_i \rightarrow V_j\rightarrow V_k$.
This clearly contradicts Property (II) and thus completes the proof of Theorem~\ref{thm: main}.

\section{Proof of Theorem \ref{thm: main}}
In this section, we establish the proof of Theorem \ref{thm: main} by reducing it to Lemmas~\ref{lem: find regular subgraph}, \ref{lem: dense subgraph}, and \ref{lem: 1root2root3}.

Let us proceed to present the statements of these lemmas.
The first lemma demonstrates that for any $K_{s,t}^{(r)}$-free $r$-uniform hypergraph $\mathcal G$,
one can find a subgraph of $\mathcal G$ with nearly the same edge density and possessing the following useful property of being ``almost-regular''.

\begin{dfn} \label{def: regular}
Let $\mathcal G$ be a $K_{s,t}^{(r)}$-free $r$-uniform $r$-partite hypergraph with parts $V_1,\cdots , V_{r }$, each of size $n$.
Let $\varepsilon\in (0,1)$ and $\alpha>0$ be constants.
We say $\mathcal G$ is {\it $( \varepsilon,\alpha)$-regular}, if $e(\mathcal G)\ge n^{r-\frac{1}{s-1}- \varepsilon}$ and for each $i\in [r]$,  there is a constant $\Delta_i$ such that every $(r-1)$-tuple $T\in \mathcal T_i(\mathcal G)$ has bounded degree:
 $$\Delta_i/\alpha \le d_{\mathcal G}(T)\le \Delta_i, \text{ where } n^{1-\frac{1}{s-1}- \varepsilon} \le \Delta_i \le n^{1-\frac{1}{s-1}+  \varepsilon }.$$
\end{dfn}
Note that if $\varepsilon'\ge \varepsilon$, $\alpha'\ge \alpha $ and $\mathcal G$ is $( \varepsilon,\alpha)$-regular,
then $\mathcal G$ is also $( \varepsilon',\alpha')$-regular.

\begin{lem}\label{lem: find regular subgraph}
Let $\mathcal G$ be a  $K_{s,t}^{(r)}$-free $r$-uniform hypergraph on $rn$ vertices and with at least $n^{r-\frac{1}{s-1}- \varepsilon }$ edges, where $\varepsilon>0$. Then $\mathcal G $ has an $\left(\varepsilon+(\log_2n)^{-1/2},4r\log_2^r n\right)$-regular subgraph $\mathcal H$.
\end{lem}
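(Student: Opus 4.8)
The plan is to iteratively clean up $\mathcal{G}$ in two stages: first pass to an $r$-partite subgraph (losing only a constant factor $r!/r^r$, or more simply by a random partition losing a factor depending only on $r$), then repeatedly delete low-degree tuples to force almost-regularity, and finally apply a dyadic pigeonhole to land inside a single "degree band" $[\Delta_i/\alpha,\Delta_i]$ for each $i$. The starting point is a $K^{(r)}_{s,t}$-free hypergraph on $rn$ vertices with at least $n^{r-\frac{1}{s-1}-\varepsilon}$ edges; after taking a balanced $r$-partition of the vertex set into $V_1,\dots,V_r$ of size $n$ each and keeping only the crossing edges, we retain at least $c_r \cdot n^{r-\frac{1}{s-1}-\varepsilon}$ edges for some $c_r>0$ depending only on $r$ (for instance, a uniformly random balanced partition keeps each edge with probability $\ge r!/r^r$). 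The mild loss of a constant factor will be absorbed into the slack $(\log_2 n)^{-1/2}$ in the exponent, since $c_r \cdot n^{r-\frac1{s-1}-\varepsilon} \ge n^{r-\frac1{s-1}-\varepsilon-(\log_2 n)^{-1/2}}$ once $n$ is large (and for small $n$ the statement is vacuous after adjusting constants).

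Next I would run the standard "delete sparse tuples" cleaning procedure. Set a threshold $\theta_i$ for each coordinate $i$ slightly below the average degree of an $(r-1)$-tuple in $\mathcal{T}_i$; as long as some tuple $T\in\mathcal{T}_i(\mathcal{G})$ has $d_{\mathcal{G}}(T)<\theta_i$, delete all edges through $T$. Each such deletion removes fewer than $\theta_i$ edges, and there are at most $r\cdot n^{r-1}$ tuples across all coordinates, so the total number of edges removed is at most $\sum_i n^{r-1}\theta_i$; choosing each $\theta_i$ to be a suitable fraction (say $1/(2r)$) of the current average degree guarantees we never delete more than half the edges. The surviving subgraph $\mathcal{H}$ still has at least $\frac12 c_r n^{r-\frac1{s-1}-\varepsilon}$ edges and now satisfies $d_{\mathcal{H}}(T)\ge \theta_i$ for every $T\in\mathcal{T}_i(\mathcal{H})$, i.e. a \emph{lower} bound on all nonzero tuple degrees. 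From the edge count and the trivial bound $d_{\mathcal{H}}(T)\le n$ one gets $\theta_i \gtrsim n^{1-\frac1{s-1}-\varepsilon}/\mathrm{poly}\log$, so the lower degree bound is of the required order up to the allowed perturbation of the exponent.

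Finally, to also get a matching \emph{upper} bound on tuple degrees within a bounded multiplicative window, I would dyadically partition the range of possible degrees $[\theta_i,\,n]$, which has only $O(\log n)$ dyadic blocks in each coordinate, hence $O(\log^r n)$ joint choices of blocks $(B_1,\dots,B_r)$. Assign each edge $e$ the tuple of blocks recording, for each $i$, which dyadic block contains $d_{\mathcal{H}}(e\setminus\{e\cap V_i\})$; by pigeonhole one joint block type captures at least an $\Omega(1/\log^r n)$ fraction of the edges, and restricting $\mathcal{H}$ to those edges (this may only decrease tuple degrees, so the lower bounds are preserved) yields a subgraph in which every $(r-1)$-tuple in coordinate $i$ has degree in a fixed interval $[\Delta_i/2,\Delta_i]$. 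Tracking the accumulated constant factors — the $c_r$ from partitioning, the $1/2$ from cleaning, the $1/\log^r n$ from the dyadic step, and the factor $2$ in the window — gives $\alpha = 4r\log_2^r n$ and an extra additive $(\log_2 n)^{-1/2}$ in the exponent as claimed, after checking $n^{(\log_2 n)^{-1/2}} \ge 8r\log_2^r n$ for large $n$. The only real subtlety is bookkeeping: one must order the three operations so that each later step does not destroy the property established by an earlier one (cleaning and dyadic restriction both only decrease degrees, so lower bounds survive; the dyadic step supplies the upper bound last), and one must verify the degree bounds $n^{1-\frac1{s-1}-\varepsilon}\le\Delta_i\le n^{1-\frac1{s-1}+\varepsilon}$ are consistent with the edge lower bound $e(\mathcal{H})\ge n^{r-\frac1{s-1}-\varepsilon}$ via the identity $e(\mathcal{H}) = \sum_{T\in\mathcal{T}_i(\mathcal{H})} d_{\mathcal{H}}(T)$ and $|\mathcal{T}_i(\mathcal{H})|\le n^{r-1}$ — this forces $\Delta_i$ into the stated band automatically. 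I expect the constant-chasing, rather than any conceptual difficulty, to be the main obstacle.
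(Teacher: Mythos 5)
Your outline matches the paper's general strategy (pass to an $r$-partite subgraph, dyadic pigeonhole into degree bands, delete sparse tuples), but there are two genuine problems. The minor one is the ordering and the justification for it: you claim that cleaning and the dyadic restriction ``only decrease tuple degrees, so the lower bounds are preserved.'' Decreasing degrees preserves \emph{upper} bounds and destroys \emph{lower} bounds --- after you restrict to the edges of one joint block type, a tuple that had degree $\theta_i$ may retain only one of its edges, so the final graph carries no useful lower degree bound. The paper avoids this by taking the dyadic steps first (their upper bounds $d(T)\le\Delta_i$ survive all later deletions) and performing the low-degree deletion \emph{last}, with threshold $\Delta_i/(4r\log_2^r n)$ tied to the band ceiling $\Delta_i$ rather than to the average degree; the deletion count is then controlled via $|\mathcal T_i|\le 2e(\mathcal G')/\Delta_i$, which comes from the band's lower edge. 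Your version, with threshold a fraction of the average degree and the crude bound $|\mathcal T_i|\le n^{r-1}$, would in any case only give a lower bound of order $n^{1-\frac{1}{s-1}-\varepsilon}$, which is a factor $n^{2\varepsilon}$ (not $4r\log_2^r n$) below a ceiling that could sit at $n^{1-\frac{1}{s-1}+\varepsilon}$.

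The serious gap is the claim that the bound $\Delta_i\le n^{1-\frac{1}{s-1}+\varepsilon'}$ is ``forced automatically'' by $e(\mathcal H)=\sum_T d_{\mathcal H}(T)$ and $|\mathcal T_i(\mathcal H)|\le n^{r-1}$. That identity yields only the \emph{lower} bound $\Delta_i\ge e(\mathcal H)/n^{r-1}$; nothing in it prevents the winning dyadic band from sitting at, say, $\Delta_i\approx n^{0.99}$ with about $e(\mathcal H)/n^{0.99}$ tuples in it. The upper bound on $\Delta_i$ is precisely the point of the lemma where the $K_{s,t}^{(r)}$-free hypothesis must enter --- your proposal never uses it. The paper's argument double-counts pairs $(S,T)$ with $T\in\mathcal T_1(\mathcal H)$ and $S$ an $s$-subset of $N_{\mathcal H}(T)$: convexity gives at least $e(\mathcal H)\cdot\bigl(\Delta_1/(4rs\log_2^r n)\bigr)^{s-1}$ such pairs, so some $s$-set $S$ has codegree at least this quantity divided by $n^s$, while $K_{s,t}^{(r)}$-freeness (via the fact that every $s$-set has fewer than $rt$ roots) caps $cd_{\mathcal H}(S)$ at $rt\Delta_1 n^{r-3}$; comparing the two and using $s\ge 3$ yields $\Delta_1\le n^{1-\frac{1}{s-1}+\varepsilon'}$. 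Without an argument of this kind your proof cannot close.
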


The following definition plays a crucial role in the approach outlined in the previous section.

\begin{dfn}\label{Def:delta-dense}
Let $\mathcal G$ be a $K_{s,t}^{(r)}$-free $r$-uniform $r$-partite  hypergraph with parts $V_1,\cdots , V_{r }$, each of size $n$.
Let $\delta>0$ be a constant.
\begin{itemize}
\item Fix an $(r-1)$-tuple $T$, a vertex $u\in T$ and a vertex $v\in N_{\mathcal G}(T)$. If there are at least $ d_{\mathcal G}(T)^{s-1}/r$ many $s$-sets $S$ satisfying that $v\in S\subseteq N_{\mathcal G}(T)$ and $cd_{\mathcal G}(S|u)\ge n^{r-2-\frac{1}{s-1}-\delta}$, then we say the pair {\it $(T; v)$ is $ \delta$-dense on $u$} in $\mathcal G$.
\item Let $\mathcal H$ be a subgraph of $\mathcal G$ and $i,j\in [r]$ be two distinct integers. If for any $(r-1)$-tuple $T\in \mathcal T_j(\mathcal H)$ and any $v\in N_{\mathcal H}(T)$, $(T; v)$ is $\delta$-dense on the vertex $T\cap V_i$ in $\mathcal G$, then
we write as $V_i  \xrightarrow[\delta]{\mathcal H, \mathcal G }  V_j .$
\end{itemize}
\end{dfn}

Before turning to the statements of remaining lemmas, we would like to make several technical remarks about Definition~\ref{Def:delta-dense}.
Firstly, with appropriate choices of $\delta$ and $\varepsilon$, the condition $cd_{\mathcal G}(S|u)\ge n^{r-2-\frac{1}{s-1}-\delta}$ would imply that $u$ is a root of $S$.\footnote{This fact will be explicitly demonstrated in the proof of the first conclusion of Lemma~\ref{lem: reduce to bipartite}.}
Secondly, the notation $V_i \xrightarrow[\delta]{\mathcal H, \mathcal G} V_j$ can be equivalently expressed as follows: for any $e\in E(\mathcal H)$, the pair $(e\backslash V_j; e\cap V_j)$ is $\delta$-dense on the vertex $e\cap V_i$ in $\mathcal G$.
Lastly, if $\delta'\geq \delta$ and $V_i \xrightarrow[\delta]{\mathcal H, \mathcal G} V_j$, then we also have $V_i \xrightarrow[\delta']{\mathcal H, \mathcal G} V_j$.

\medskip

The following two lemmas will be utilized to establish Property (I) and Property (II), respectively.

\begin{lem}\label{lem: dense subgraph}
Let $\varepsilon\in (0,1)$ and $\alpha> 0$ be constants satisfying that  $\alpha=o\left(n^{\varepsilon/s}\right)$.
Suppose $\mathcal G$ is an  $(\varepsilon,\alpha)$-regular $K_{s,t}^{(r)}$-free $r$-uniform $r$-partite  hypergraph with parts $V_1,\cdots , V_{r }$, each of size $n$. For any part $V_j$,
there exists an $(\varepsilon+\log_n 4r,4r^2\alpha)$-regular subgraph $\mathcal H\subseteq \mathcal G$ and a distinct part $V_i$ such that
$V_i \xrightarrow[\delta]{\mathcal  H, \mathcal G } V_j$, where $\delta:=(s+1)\varepsilon$.
\end{lem}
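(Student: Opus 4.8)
\textbf{Proof proposal for Lemma~\ref{lem: dense subgraph}.}

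The plan is to fix the part $V_j$ and, by a double-counting argument over $(r-1)$-tuples together with their links, locate a part $V_i$ and a dense subgraph $\mathcal H$ for which \emph{every} edge of $\mathcal H$ gives a pair that is $\delta$-dense on its $V_i$-vertex. First I would set up the relevant quantity: for a tuple $T\in\mathcal T_j(\mathcal G)$ and a vertex $v\in N_{\mathcal G}(T)$, call the pair $(T;v)$ \emph{bad} (for index $i$) if it is \emph{not} $(s+1)\varepsilon$-dense on $T\cap V_i$, i.e.\ fewer than $d_{\mathcal G}(T)^{s-1}/r$ of the $s$-sets $S$ with $v\in S\subseteq N_{\mathcal G}(T)$ satisfy $cd_{\mathcal G}(S|T\cap V_i)\ge n^{r-2-\frac{1}{s-1}-\delta}$. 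The first task is to show that for each fixed $T$, if $(T;v)$ is bad for \emph{every} choice of $i\ne j$ and \emph{every} $v\in N_{\mathcal G}(T)$, then we get a contradiction with $(\varepsilon,\alpha)$-regularity — this is the mechanism that forces the existence of a good direction $V_i\to V_j$. Concretely, from badness one obtains for each $v$ that most $s$-subsets $S\ni v$ of $N_{\mathcal G}(T)$ have $cd_{\mathcal G}(S|u)$ \emph{small} for all roots $u$, and then summing $cd_{\mathcal G}(S)\le\sum_{u}cd_{\mathcal G}(S|u)$ over roots (there are fewer than $rt$ of them by \eqref{equ:roots}) gives $cd_{\mathcal G}(S)< rt\cdot n^{r-2-\frac{1}{s-1}-\delta}$ for most $S\subseteq N_{\mathcal G}(T)$; but a convexity/Kővári–Sós–Turán-type count inside the link $N_{\mathcal G}(T)$ — which has roughly $\Delta_j\approx n^{1-\frac1{s-1}}$ "points" and total degree controlled by the regularity of $\mathcal G$ — forces the average codegree of an $s$-set in $N_{\mathcal G}(T)$ to be substantially larger than this, a contradiction once $\delta=(s+1)\varepsilon$ is large enough relative to $\varepsilon$.

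Having established that every $(r-1)$-tuple $T\in\mathcal T_j(\mathcal G)$ admits at least one "good" index $i(T)\ne j$ and at least one $v$ witnessing $\delta$-density — and in fact, strengthening the argument, that for a good index the density holds for a positive fraction of the $v\in N_{\mathcal G}(T)$ — I would then pass to a cleaned-up subgraph. Since there are only $r-1$ choices for $i(T)$, some fixed $i$ is good for tuples $T$ carrying at least a $1/(r-1)$ fraction of the edges of $\mathcal G$; discard the rest. Then within this subgraph, for each surviving $T$ at least (say) a $1/r$ fraction of the vertices $v\in N_{\mathcal G}(T)$ make $(T;v)$ $\delta$-dense on $T\cap V_i$ — delete the non-dense link-vertices for each $T$, which removes at most a $(1-1/r)$ fraction of edges. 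After these deletions we have a subgraph $\mathcal H_0\subseteq\mathcal G$ with $e(\mathcal H_0)\ge e(\mathcal G)/(r(r-1))\ge n^{r-\frac1{s-1}-\varepsilon}/(r(r-1))$ in which \emph{every} edge $e$ has $(e\setminus V_j;\,e\cap V_j)$ $\delta$-dense on $e\cap V_i$ in $\mathcal G$. Finally, apply Lemma~\ref{lem: find regular subgraph} to $\mathcal H_0$ (on its $\le rn$ vertices, with $\ge n^{r-\frac1{s-1}-\varepsilon-\log_n(r(r-1))}$ edges, and noting $e(\mathcal H_0)\ge n^{r-1/(s-1)-\varepsilon}/(r(r-1))$) to extract an $(\varepsilon',\alpha')$-regular subgraph $\mathcal H\subseteq\mathcal H_0\subseteq\mathcal G$; track the parameters so that $\varepsilon'\le\varepsilon+\log_n(4r)$ and $\alpha'\le 4r^2\alpha$. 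Since $\mathcal H\subseteq\mathcal H_0$, every edge of $\mathcal H$ still witnesses the $\delta$-dense property, which by the second bullet of Definition~\ref{Def:delta-dense} is exactly $V_i\xrightarrow[\delta]{\mathcal H,\mathcal G}V_j$. One must also check the hypothesis $\alpha=o(n^{\varepsilon/s})$ is what allows the regularity extraction and the counting estimates to go through without the error terms swamping the gap between $\delta$ and $\varepsilon$.

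The main obstacle I anticipate is the first step: quantitatively ruling out the "all bad" scenario for a fixed $T$. One has to balance three things carefully — the number of $s$-subsets of the link $N_{\mathcal G}(T)$ (which is $\binom{d_{\mathcal G}(T)}{s}\approx \Delta_j^{\,s}$), the total codegree mass $\sum_{S}cd_{\mathcal G}(S)$ inside that link (which must be lower-bounded using that $\mathcal G$ is globally dense and almost-regular, essentially a Cauchy–Schwarz / Jensen step over the $(r-2)$-tuples lying in $N_{\mathcal G}(T)$), and the per-$S$ upper bound $rt\cdot n^{r-2-\frac1{s-1}-\delta}$ coming from badness plus \eqref{equ:roots}. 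The inequality one needs is that the average of $cd_{\mathcal G}(S)$ over $S\subseteq N_{\mathcal G}(T)$ exceeds $rt\cdot n^{r-2-\frac1{s-1}-\delta}$; getting the exponents to line up — so that $\delta=(s+1)\varepsilon$ and not something larger — is where the $(s+1)$ factor and the slack in the definition of $(\varepsilon,\alpha)$-regularity ($\Delta_i$ ranging over $n^{\pm\varepsilon}$) get consumed, and this is the delicate bookkeeping to execute. A secondary, more routine nuisance is making sure the two rounds of subgraph-passing (fraction-discarding, then Lemma~\ref{lem: find regular subgraph}) compose to give exactly the claimed parameters $(\varepsilon+\log_n 4r,\,4r^2\alpha)$ rather than something slightly worse, which may require being a little generous at intermediate stages.
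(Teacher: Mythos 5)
Your proposal diverges from the paper at the crucial first step, and that step has a genuine gap. You propose to derive a contradiction \emph{for each fixed tuple} $T$ from the assumption that $(T;v)$ is bad for every $v\in N_{\mathcal G}(T)$ and every $i\neq j$. This local statement is not provable, for two reasons. First, badness for all $i$ only tells you that $cd_{\mathcal G}(S|u)$ is small for the vertices $u\in T$; the roots of $S$ (the fixed minimum vertex cover of $\CN_{\mathcal G}(S)$) need not lie in $T$, so you cannot pass from this to $cd_{\mathcal G}(S)\le\sum_u cd_{\mathcal G}(S|u)<rt\cdot n^{r-2-\frac1{s-1}-\delta}$: all of the codegree mass of $S$ may be carried by roots outside $T$. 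Second, and more fundamentally, there is no lower bound on the average of $cd_{\mathcal G}(S)$ over $s$-sets $S\subseteq N_{\mathcal G}(T)$ for a \emph{single} $T$: writing $\sum_{S\subseteq N_{\mathcal G}(T)}cd_{\mathcal G}(S)=\sum_{B}\binom{|N_{\mathcal G}(B)\cap N_{\mathcal G}(T)|}{s}$, the $(\varepsilon,\alpha)$-regularity only guarantees that each vertex of $N_{\mathcal G}(T)$ has degree roughly $n^{1-\frac1{s-1}}$, and nothing prevents these link hypergraphs from being pairwise disjoint outside $T$ itself, in which case every $s$-set in $N_{\mathcal G}(T)$ has codegree $1$ and no contradiction arises. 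The paper's argument is necessarily \emph{global} and runs the double count in the opposite direction: it counts pairs $(S,e)$ with $(S,T_e)$ small over all edges at once, and for a \emph{fixed} $s$-set $S$ it bounds the number of tuples $T\in\CN_{\mathcal G}(S)$ forming a small pair by $\sum_u cd_{\mathcal G}(S|u)<rt\cdot n^{r-2-\frac1{s-1}-(s+1)\varepsilon}$, summed over the fewer than $rt$ ``small'' roots $u$ of $S$ — here \eqref{equ:roots} is used legitimately because every $T\in\CN_{\mathcal G}(S)$ must contain some root of $S$. This shows the edges with no good index are a $o(1)$ fraction, after which the pigeonhole over $i$ proceeds as you describe.

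Two secondary points. Your pigeonhole-and-clean step assigns an index per tuple $T$, but the index witnessing density can depend on the pair $(T,v)$, i.e.\ on the edge; the paper pigeonholes on edges directly via the function $f$, which is cleaner and is what the subsequent regularization needs. And for the final extraction you invoke Lemma~\ref{lem: find regular subgraph}, which returns an $(\varepsilon+\log_n(r(r-1))+(\log_2 n)^{-1/2},\,4r\log_2^r n)$-regular subgraph; neither parameter is bounded by the claimed $(\varepsilon+\log_n 4r,\,4r^2\alpha)$ (the additive $(\log_2 n)^{-1/2}$ dwarfs $\log_n 4r$, and $4r\log_2^r n$ is not $O(\alpha)$ when $\alpha$ is constant). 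You need the relative deletion argument of Lemma~\ref{coro: regular graph subgraph} with $c=r$, which regularizes \emph{against the degrees of the ambient regular $\mathcal G$} rather than from scratch.
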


\begin{lem}\label{lem: 1root2root3}
Let $\varepsilon, \delta, \alpha>0$ satisfy $6(s+1)(\varepsilon + \delta)\le 1$ and $\alpha=o\left(n^{\varepsilon}\right)$.
Let $n$ be sufficiently large and $\mathcal H_1\subseteq\mathcal H\subseteq \mathcal G_1 \subseteq\mathcal G$ be a sequence of $(\varepsilon, \alpha) $-regular $K_{s,t}^{(r)}$-free $r$-uniform $r$-partite hypergraphs  with parts $V_1,\cdots , V_{r }$, each of size $n$. If
$V_i \xrightarrow[\delta]{\mathcal  H_1, \mathcal H}  V_j \xrightarrow[\delta]{\mathcal  G_1,\mathcal G }  V_k$ holds for $j\notin \{i,k\}$,
then $k=i$.
\end{lem}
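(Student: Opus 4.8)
The plan is to derive a contradiction from assuming $k\neq i$ by double-counting pairs $(S,y)$ where $y\in V_j$ is a root of an $s$-set $S\subseteq V_k$, exactly as foreshadowed in the proof sketch. First I would use the hypothesis $V_i \xrightarrow[\delta]{\mathcal{H}_1,\mathcal{H}} V_j$ together with a bipartite-reduction step (the ``Lemma~\ref{lem: reduce to bipartite}'' alluded to in the sketch, which I would state and prove separately) to extract large sets $Y\subseteq V_j$ and $Z\subseteq V_k$ such that for every $y\in Y$, the projection of $N_{\mathcal{H}}(y)$ onto $Z$ is nearly complete — say it contains all but $n^{1-\Omega(\varepsilon)}$ of the vertices of $Z$, or more precisely the $y$-link restricted to $Z$ has codegree-type density forcing many $s$-subsets of $Z$ to lie in $N_{\mathcal{H}}(y)$. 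The point of passing through $V_k$ rather than $V_i$ here is that $k\notin\{i,j\}$ so that $Z$ lives in a part disjoint from the one carrying the roots; this is where the assumption $j\notin\{i,k\}$ gets used, and implicitly where oddness of $r$ will later bite in the main theorem.

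Next I would exploit the second arrow $V_j \xrightarrow[\delta]{\mathcal{G}_1,\mathcal{G}} V_k$. For a typical $y\in Y$ and a typical $s$-set $S\subseteq Z$ with $S\subseteq N_{\mathcal{H}}(y)$, I want to say that $y$ is a root of $S$ in $\mathcal{G}$. The $\delta$-dense hypothesis on the pair $(e\setminus V_k; e\cap V_k)$ relative to the vertex $e\cap V_j$ gives, for each edge $e\in E(\mathcal{G}_1)$ through $y$, a positive proportion of $s$-sets $S'\subseteq N_{\mathcal{G}}(e\setminus V_k)$ with $cd_{\mathcal{G}}(S'|y)\ge n^{r-2-\frac1{s-1}-\delta}$, and by the first remark after Definition~\ref{Def:delta-dense} (to be verified via the computation promised for Lemma~\ref{lem: reduce to bipartite}) such a lower bound on $cd_{\mathcal{G}}(S'|y)$ forces $y$ to be a root of $S'$. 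Combining this with the almost-completeness of the $Y$–$Z$ bipartite structure, and using the $(\varepsilon,\alpha)$-regularity of $\mathcal{H}$ and $\mathcal{G}_1$ to guarantee that the edges of $\mathcal{G}_1$ through a typical $y$ reach essentially all of $Z$ in their $V_i$-free links, I would conclude that the number of pairs $(S,y)$ with $y\in Y$, $S\subseteq Z$ an $s$-set, and $y$ a root of $S$ is at least $(|Y|\,|Z|^s)^{1-O(\varepsilon+\delta)}$. Since $|Y|,|Z|$ are within an $n^{O(\varepsilon)}$ factor of $n$, this is at least $n^{(s+1)(1-O(\varepsilon+\delta))}$.

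On the other hand, by \eqref{equ:roots} every $s$-set $S\subseteq Z$ has fewer than $rt$ roots, so the number of such pairs is at most $rt\binom{|Z|}{s}<rt\, n^s$. Comparing the two bounds gives $n^{(s+1)(1-O(\varepsilon+\delta))}\le rt\,n^s$, i.e. $n^{1-O(s)(\varepsilon+\delta)}\le rt$, which fails for $n$ large once the exponent $1-O(s)(\varepsilon+\delta)$ is bounded below by a positive constant — this is precisely what the hypothesis $6(s+1)(\varepsilon+\delta)\le 1$ is engineered to ensure, and the condition $\alpha=o(n^\varepsilon)$ absorbs the regularity error terms into the $n^{O(\varepsilon)}$ slack. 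Hence $k\neq i$ is impossible, so $k=i$.

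The main obstacle is the middle step: making rigorous the claim that a genuinely large (i.e. $(|Y||Z|^s)^{1-o(1)}$, not merely positive) fraction of pairs $(S,y)$ are root-incidences. Extracting $Y$ and $Z$ with a \emph{nearly complete} bipartite link (rather than just dense) requires a careful iterated-cleaning or defect-counting argument inside the proof of Lemma~\ref{lem: reduce to bipartite}, and then one must check that the $\delta$-density guarantees survive when restricted to $s$-sets drawn from the cleaned set $Z$ — the $d_{\mathcal{G}}(T)^{s-1}/r$ threshold in Definition~\ref{Def:delta-dense} has to be reconciled with the $(\varepsilon,\alpha)$-regular bound $d_{\mathcal{G}}(T)\approx n^{1-\frac1{s-1}}$ so that ``a positive fraction of $s$-sets in the link'' translates to ``$\approx |Z|^s$ many $s$-sets in $Z$''. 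Keeping all the exponents aligned so that the final count is $n^{(s+1)-O(s)(\varepsilon+\delta)}$ and no worse is the delicate bookkeeping that the constant $6$ in $6(s+1)(\varepsilon+\delta)\le 1$ is chosen to survive.
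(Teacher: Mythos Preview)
Your overall strategy is the paper's: use Lemma~\ref{lem: reduce to bipartite}(2) on the arrow $V_i\xrightarrow[\delta]{\mathcal H_1,\mathcal H}V_j$ to produce $Y\subseteq V_j$, $Z\subseteq V_k$ with large overlap $X_y:=N_{\mathcal H}(\{v_i,y\}\cup R)\cap Z$ for each $y\in Y$, then use the arrow $V_j\xrightarrow[\delta]{\mathcal G_1,\mathcal G}V_k$ together with Lemma~\ref{lem: reduce to bipartite}(1) to exhibit many $s$-sets in $X_y\subseteq Z$ rooted on $y$, and compare with the upper bound $rt|Z|^s$ from \eqref{equ:roots}. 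That is exactly right.

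There is, however, a genuine quantitative slip that breaks your written argument. You assert that $|Y|,|Z|$ are within $n^{O(\varepsilon)}$ of $n$, and then compare a lower bound $n^{(s+1)(1-O(\varepsilon+\delta))}$ against an upper bound $rt\binom{|Z|}{s}<rt\,n^s$. In fact $Y$ and $Z$ both live inside neighborhoods of $(r-1)$-tuples, so by $(\varepsilon,\alpha)$-regularity $|Y|,|Z|=n^{1-\frac{1}{s-1}\pm O(\varepsilon+\delta)}$, \emph{not} $n^{1\pm O(\varepsilon)}$. With the true sizes, your lower bound is only about $n^{(s+1)(1-\frac{1}{s-1})}$, and since $(s+1)\bigl(1-\tfrac{1}{s-1}\bigr)=s-\tfrac{2}{s-1}<s$, this does \emph{not} contradict the weakened upper bound $rt\,n^s$. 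The remedy is simply not to relax $rt|Z|^s$ to $rt\,n^s$: keep the upper bound as $rt|Z|^s$ and use $|X_y|\ge n^{-O(\varepsilon+\delta)}|Z|$ so that the $|Z|^s$ factors cancel, leaving $|Y|\le rt\cdot n^{O(s)(\varepsilon+\delta)}$. Since $|Y|\ge n^{1-\frac{1}{s-1}-O(\varepsilon+\delta)}$ and $1-\tfrac{1}{s-1}\ge \tfrac12$ for $s\ge 3$, the hypothesis $6(s+1)(\varepsilon+\delta)\le 1$ now yields the contradiction.

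A secondary remark: you describe the $Y$--$Z$ link as ``nearly complete'' and flag an iterated-cleaning argument as the main obstacle. This overshoots what is needed and what Lemma~\ref{lem: reduce to bipartite}(2) actually delivers. One averaging over a fixed $(r-3)$-tuple $R$ gives, for every $y\in Y$, that $|N_{\mathcal H}(\{v_i,y\}\cup R)\cap Z|\ge n^{-O(\varepsilon+\delta)}|Z|$; this density (not near-completeness) already suffices for Lemma~\ref{lem: reduce to bipartite}(1) to produce $n^{-O(s)(\varepsilon+\delta)}|Z|^s$ many $s$-sets rooted on $y$. The fixed tuple $R$ is what lets you apply the second $\delta$-dense hypothesis uniformly across $y\in Y$, and is worth making explicit.
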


Finally, we are prepared to prove Theorem \ref{thm: main}, assuming Lemmas~\ref{lem: find regular subgraph}, \ref{lem: dense subgraph}, and \ref{lem: 1root2root3}.

\begin{proof}[\bf Proof of Theorem \ref{thm: main}.]
Let $\varepsilon= \frac{1}{6(s+2)^2}$ throughout this proof. It suffices to show that for any odd $r\geq 3$ and sufficiently large integer $n$,
every $K_{s,t}^{(r)}$-free $r$-uniform hypergraph $\mathcal G$ on $rn$ vertices has at most $n^{r-\frac{1}{s-1}-\varepsilon }$ edges.
Suppose for a contradiction that there exists such an $r$-uniform hypergraph $\mathcal G$ with more than $n^{r-\frac{1}{s-1}-\varepsilon }$ edges.

By Lemma~\ref{lem: find regular subgraph}, $\mathcal G$ contains an $\left(\varepsilon_1,\alpha_1\right)$-regular subgraph $\mathcal G_1$, where $\varepsilon_1=\varepsilon +(\log_2n)^{-1/2}$ and $\alpha_1=4r\log_2^r n$.
Then $\mathcal G_1$ is balanced $r$-partite, say with parts $V_1,\cdots, V_r$, each of size $n$.
Let $$\varepsilon_i=\varepsilon_1+(i-1)\log_{n}4r \mbox{ ~and~ } \alpha_i=(4r^2)^{i-1}\alpha_1 \mbox{ for all } i\geq 1.$$
Also let $$\varepsilon^*=\varepsilon+2r(\log_2 n)^{-1/2} \mbox{ ~and~ } \alpha^*=(4r^2)^{r+2}\log_2^r n.$$
We note that for each $1\leq i\leq r+2$,
\begin{equation}\label{equ:alpha*}
\mbox{$\varepsilon^*\geq \varepsilon_i$, ~$\alpha^*\geq \alpha_i$, ~and $\alpha_i$ satisfies the condition of Lemma~\ref{lem: dense subgraph}.}
\end{equation}

We will iteratively apply Lemma~\ref{lem: dense subgraph} to obtain a sequence of
$K_{s,t}^{(r)}$-free $r$-uniform $r$-partite hypergraphs $\mathcal G_1\supseteq \mathcal G_2 \supseteq \cdots \supseteq \mathcal G_{r+2}$ on $rn$ vertices as follows.
Initially, let $a_1=1$; applying Lemma~\ref{lem: dense subgraph} with respect to $\mathcal G_1$ (which is $\left(\varepsilon_1,\alpha_1\right)$-regular) and the part $V_{a_1}$, there exist an $(\varepsilon_2,\alpha_2)$-regular subgraph $\mathcal G_2\subseteq \mathcal G_1$ and an index $b_1\neq a_1$ such that
$V_{b_1} \xrightarrow[(s+1)\varepsilon_1]{\mathcal G_2, \mathcal G_1 } V_{a_1}$;
then applying Lemma~\ref{lem: dense subgraph} with respect to $\mathcal G_2$ and the part $V_{b_1}$,
there exist an $(\varepsilon_3,\alpha_3)$-regular subgraph $\mathcal G_3\subseteq \mathcal G_2$ and an index $c_1\neq b_1$ such that
$V_{c_1} \xrightarrow[(s+1)\varepsilon_2]{\mathcal G_3, \mathcal G_2 } V_{b_1}$.
Now assume that the sequence has been defined for $\mathcal G_1\supseteq \cdots \supseteq \mathcal G_{2i-1}$ for some $2\leq i\leq (r+1)/2$.
We choose an index $a_i\in [r]\backslash \left(\cup_{1\leq j\leq i-1} \left\{a_j,b_j,c_j\right\}\right)$,\footnote{We will see later that such an index is always valid as long as $i\leq (r+1)/2$.}
and then apply Lemma~\ref{lem: dense subgraph} twice to get subgraphs $\mathcal G_{2i+1}\subseteq \mathcal G_{2i}\subseteq \mathcal G_{2i-1}$ and indices $b_i, c_i\in [r]$ such that
$$V_{c_i} \xrightarrow[(s+1)\varepsilon_{2i}]{\mathcal G_{2i+1}, \mathcal G_{2i} } V_{b_i} \xrightarrow[(s+1)\varepsilon_{2i-1}]{\mathcal G_{2i}, \mathcal G_{2i-1} } V_{a_i}, \mbox{where $\mathcal G_{j}$ is $(\varepsilon_{j},\alpha_{j})$-regular for } j\in \{2i, 2i+1\}.$$
Let $\delta^*=(s+1)\varepsilon^*$.
Then as $n$ is sufficiently large, it follows that
\begin{equation}\label{equ:eps+del}
6(s+1)(\varepsilon^* + \delta^*)=6(s+1)(s+2)\left(\frac{1}{6(s+2)^2}+2r(\log_2 n)^{-1/2}\right)<1.
\end{equation}
In view of \eqref{equ:alpha*} and the remarks after Definitions~\ref{def: regular} and \ref{Def:delta-dense},
we see that $\mathcal G_\ell$ is $(\varepsilon^*,\alpha^*)$-regular for each $1\leq \ell \leq r+2$,
and
\begin{equation}\label{equ:ci->ai}
V_{c_i} \xrightarrow[\delta^*]{\mathcal G_{2i+1}, \mathcal G_{2i} } V_{b_i} \xrightarrow[\delta^*]{\mathcal G_{2i}, \mathcal G_{2i-1} } V_{a_i} \mbox{ holds for each } 1\leq i\leq (r+1)/2.
\end{equation}

By \eqref{equ:eps+del} and the fact $\alpha^*=o\left(n^{\varepsilon}\right)$, using Lemma~\ref{lem: 1root2root3},
we can easily conclude that $c_i=a_i$ for all $1\leq i\leq (r+1)/2$.
By the choice of $a_i$, evidently $a_i$ is distinct from the indices in $\cup_{1\leq j\leq i-1} \left\{a_j,b_j\right\}$.
We claim that $b_i$ is also distinct from the indices in $\cup_{1\leq j\leq i-1} \left\{a_j,b_j\right\}$.
Otherwise, $b_i\in \{a_j,b_j\}$ for some $1\leq j\leq i-1$, which, combining \eqref{equ:ci->ai} for the index $j$ (also using $c_j=a_j$),
would yield that
$$\mbox{either } V_{a_i} \xrightarrow[\delta^*]{\mathcal G_{2i+1}, \mathcal G_{2i} } V_{b_i}\xrightarrow[\delta^*]{\mathcal G_{2j+1}, \mathcal G_{2j} } V_{b_j} \mbox{ (if } b_i=a_j), \mbox{ or } V_{a_i} \xrightarrow[\delta^*]{\mathcal G_{2i+1}, \mathcal G_{2i} } V_{b_i}\xrightarrow[\delta^*]{\mathcal G_{2j}, \mathcal G_{2j-1} } V_{a_j}\mbox{ (if } b_i=b_j).$$
Using Lemma~\ref{lem: 1root2root3} again, we then deduce that either $a_i=b_j$ or $a_i=a_j$, a contradiction to the choice of $a_i$, proving the claim.
This shows that $\{a_i,b_i\}$ is disjoint from $\{a_j,b_j\}$ whenever $1\leq i\neq j\leq (r+1)/2$.
Consequently, $|\bigcup_{1\leq i\leq (r+1)/2} \{a_j,b_j\}|=2\cdot (r+1)/2=r+1$.
However, this contradicts the fact that $\bigcup_{1\leq i\leq (r+1)/2} \{a_j,b_j\}\subseteq [r]$.
This final contradiction completes the proof of Theorem~\ref{thm: main}.
\end{proof}

\section{Proof of lemmas}
This section is devoted to the proofs of Lemmas~\ref{lem: find regular subgraph},  \ref{lem: dense subgraph} and \ref{lem: 1root2root3}.

\subsection{Finding $(\varepsilon,\alpha)$-regular subgraphs}
In this subsection, we establish Lemma~\ref{lem: find regular subgraph} along with several related properties regarding $(\varepsilon,\alpha)$-regularity.
The first lemma will be frequently used later to provide upper bounds on the (co-)degrees of subsets.

\begin{lem}\label{lem: set degree}
Let $\mathcal G$ be a $K_{s,t}^{(r)}$-free $r$-uniform balanced $r$-partite hypergraph on $rn$ vertices.
Suppose there is a constant $\Delta$ such that $d_{\mathcal G}(T)\le \Delta$ holds for every $(r-1)$-tuple $T$.
Let $A$ be a $k$-tuple and $S$ be an $s$-set of $\mathcal{G}$. Then the following hold that
$$ d_{\mathcal G}(A)\le  \Delta n^{r-k-1  }  \text{ ~ and ~ } cd_{\mathcal G}(S)\le rt\Delta n^{r-3}   .$$
Moreover, if $\mathcal G$ is $( \varepsilon,\alpha)$-regular, then
$$ d_{\mathcal G}(A)\le n^{r-k -\frac{1}{s-1}+  \varepsilon}  \text{ ~ and ~ }  cd_{\mathcal G}(S)\le  rtn^{r-2 -\frac{1}{s-1}+  \varepsilon  }  .$$
\end{lem}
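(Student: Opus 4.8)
The plan is to bound $d_{\mathcal G}(A)$ for a $k$-tuple $A$ by iterated degree counting, then use the codegree estimate $cd_{\mathcal G}(S)\leq\sum_u cd_{\mathcal G}(S|u)$ over roots $u$ of $S$ together with the root bound \eqref{equ:roots}. First I would handle the degree bound $d_{\mathcal G}(A)\le\Delta n^{r-k-1}$. The link hypergraph $N_{\mathcal G}(A)$ is an $(r-k)$-uniform hypergraph on the $n(r-k)$ vertices outside the parts meeting $A$; I want to count its edges. Pick an edge $e\in N_{\mathcal G}(A)$, single out one vertex $w\in e$, and note that $e\setminus\{w\}$ extends $A\cup\{w\}$ to a set of size $k+1$ whose further extensions lie in $N_{\mathcal G}(A\cup\{w\})$. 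Iterating: choosing vertices $w_1,\dots,w_{r-k-1}$ one at a time (at most $n$ choices for each, since the hypergraph is $r$-partite with parts of size $n$), we reduce $A$ to an $(r-1)$-tuple $T\supseteq A$, and the number of ways to complete to an edge is $d_{\mathcal G}(T)\le\Delta$. Hence $d_{\mathcal G}(A)\le n^{r-k-1}\Delta$. (One should organize this as: $d_{\mathcal G}(A)=\frac{1}{r-k}\sum_{w}d_{\mathcal G}(A\cup\{w\})\le n\cdot\max_w d_{\mathcal G}(A\cup\{w\})$ style inequality, then induct on $r-k$ down to $k=r-1$; the constant factors only help.)

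Next, for $cd_{\mathcal G}(S)$ with $S$ an $s$-set: every edge of $\CN_{\mathcal G}(S)=\bigcap_{v\in S}N_{\mathcal G}(v)$ contains a root of $S$, so $cd_{\mathcal G}(S)\le\sum_{u\ \mathrm{root\ of}\ S}cd_{\mathcal G}(S|u)$. By \eqref{equ:roots} there are fewer than $rt$ roots. For a fixed root $u$, $cd_{\mathcal G}(S|u)$ counts edges of $\CN_{\mathcal G}(S)$ through $u$; each such edge is an $(r-1)$-set containing $u$ that together with $u$ forms an edge, and deleting $u$ it extends the $(s+1)$-set... more precisely, an edge of $\CN_{\mathcal G}(S)$ through $u$ is an $(r-1)$-set $f\ni u$ such that $f\cup\{v\}\in E(\mathcal G)$ for all $v\in S$; in particular $f\cup\{v_1\}\in E(\mathcal G)$ for one fixed $v_1\in S$, so $f\setminus\{u\}\in N_{\mathcal G}(\{u,v_1\})$, a $2$-tuple. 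Thus $cd_{\mathcal G}(S|u)\le d_{\mathcal G}(\{u,v_1\})\le\Delta n^{r-3}$ by the first part applied with $k=2$. Summing over fewer than $rt$ roots gives $cd_{\mathcal G}(S)\le rt\Delta n^{r-3}$.

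Finally, the $(\varepsilon,\alpha)$-regular refinements are immediate substitutions: by Definition~\ref{def: regular} each $(r-1)$-tuple has degree at most $\Delta_i\le n^{1-\frac{1}{s-1}+\varepsilon}$, so one may take $\Delta=n^{1-\frac{1}{s-1}+\varepsilon}$ as the uniform bound, yielding $d_{\mathcal G}(A)\le n^{r-k-1}\cdot n^{1-\frac{1}{s-1}+\varepsilon}=n^{r-k-\frac{1}{s-1}+\varepsilon}$ and $cd_{\mathcal G}(S)\le rt\, n^{r-3}\cdot n^{1-\frac{1}{s-1}+\varepsilon}=rt\, n^{r-2-\frac{1}{s-1}+\varepsilon}$. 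I do not anticipate a genuine obstacle here; the only point requiring a little care is the bookkeeping in the iterated-degree argument — making sure the vertices $w_1,\dots$ are chosen from the correct parts so that each step really has at most $n$ choices and the final set is a legitimate $(r-1)$-tuple rather than something lying in fewer than $r-1$ parts. Choosing, at each step, a vertex in a part not yet used by $A$ handles this cleanly, and the $r$-partite structure guarantees $r-1-k$ such parts remain. (An alternative, slicker route for the degree bound: $N_{\mathcal G}(A)$ has its edges covered by extending to $(r-1)$-tuples, of which there are at most $n^{r-1-k}$ in the relevant parts, each contributing at most $\Delta$; but one must divide by how many $(r-1)$-tuples each edge sits in, which again only helps.)
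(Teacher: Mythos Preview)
Your proposal is correct and follows essentially the same approach as the paper. The paper handles the degree bound in one shot by averaging (there exists an $(r-k-1)$-set $B$ with $d_{\mathcal G}(A\cup B)\ge d_{\mathcal G}(A)/n^{r-k-1}$, hence $\Delta\ge d_{\mathcal G}(A)/n^{r-k-1}$), whereas you iterate one vertex at a time; for the codegree bound and the $(\varepsilon,\alpha)$-regular substitution your argument matches the paper's exactly.
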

\begin{proof}
By definition, $N_{\mathcal G}(A)$ is an $(r-k)$-uniform $(r-k)$-paritite hypergraph with each part of size $n$.
On average, there is an $(r-k-1)$-set $B$ such that at least $d_{\mathcal G}(A) /n^{r-k-1} $ edges in $N_{\mathcal G}(A)$ containing $B$.
It also means that $\mathcal G$ has at least $d_{\mathcal G}(A) /n^{r-k-1} $ edges containing the $(r-1)$-set  $A\cup B$.
Consequently, one can get $\Delta \ge d_{\mathcal G}(A\cup B)\ge d_{\mathcal G}(A) /n^{r-k-1}$.

Let $R$ be the set of roots of the $s$-set $S$. Since $\mathcal G$ is $K_{s,t}^{(r)}$-free, by \eqref{equ:roots} we have $|R|<rt$.
Fix a vertex $v\in S$. For any $u\in R$, by the previous paragraph we have $d_{\mathcal G}(\{u,v\})\le \Delta n^{r-3}.$
Thus we can derive that $cd_{\mathcal G}(S)\le \sum_{u\in R}d_{\mathcal G}(\{u,v\})\le rt\Delta n^{r-3}.$

If $\mathcal G$ is $( \varepsilon,\alpha)$-regular, then we can substitute $\Delta $ with $n^{1-\frac{1}{s-1}+ \varepsilon }$ to get the desired inequalities.
\end{proof}

We now present the proof of Lemma~\ref{lem: find regular subgraph} using standard deletion arguments.

\begin{proof}[\bf Proof of Lemma~\ref{lem: find regular subgraph}]
Let $\mathcal G$ be a $K_{s,t}^{(r)}$-free $r$-uniform hypergraph on $rn$ vertices and with at least $n^{r-\frac{1}{s-1}- \varepsilon}$ edges.
It is well known that $\mathcal G$ has an $r$-partite subgraph $\mathcal G'$ with parts $V_1,V_2,\cdots, V_r$ of size $n$
such that $e(\mathcal G')\ge \frac{r!}{r^r}e(\mathcal G)\geq \frac{r!}{r^r}n^{r-\frac{1}{s-1}- \varepsilon}$.

We first consider $(r-1)$-tuples in $\mathcal T_1(\mathcal G')$ and partition them into sub-families based on their degrees.
We define $\mathcal T_{1,j}\subseteq \mathcal T_1(\mathcal G')$ as the set consisting of all $T\in \mathcal T_1(\mathcal G')$ with $2^{j-1}\le d_{\mathcal G'}(T)\le 2^{j}$ for $j\in [\log_2n]$.
By averaging, we can find a set $\mathcal T_{1,k}$ such that the number of the corresponding edges of $\mathcal G'$ is at least $e(\mathcal G')/\log_2n$.
Let $\mathcal G_1$ be the union of these edges, and let $\Delta_1 = 2^k$.
Then we have $e(\mathcal G_1)\ge e(\mathcal G')/\log_2n$ and $\Delta_1/2\le d_{\mathcal G_1}(T)\le \Delta_1$ for any $T\in \mathcal{T}_1(\mathcal{G}_1)$.
Consequently, we obtain $|\mathcal T_1(\mathcal G_1)|\le e(\mathcal G_1)/(\Delta_1/2)\le   2e(\mathcal G')/ \Delta_1 $.
Similarly, we can get $\mathcal G_2\subseteq \mathcal G_1$ and a constant $\Delta_2$ such that $e(\mathcal G_2)\ge e(\mathcal G_1)/\log_2n$ and $\Delta_2/2\le d_{\mathcal G_2}(T)\le \Delta_2$ for every $T\in \mathcal T_2(\mathcal G_2)$.
Repeating this process, we can obtain subgraphs $\mathcal G_i\subseteq \mathcal G_{i-1}$ and constants $\Delta_i$ for all $2\leq i\leq r$
such that $e(\mathcal G_i)\ge e(\mathcal G_{i-1})/\log_2n$ and $\Delta_i/2\le d_{\mathcal G_i}(T)\le \Delta_i$ for every $T\in \mathcal T_i(\mathcal G_i)$.
Then $|\mathcal T_i(\mathcal G_i)|\le e(\mathcal G_i)/(\Delta_i/2)\le 2e(\mathcal G')/ \Delta_i$ for any $i\in [r]$.

Collecting the above estimations in $\mathcal G_r$, we have $e(\mathcal G_r)\ge e(\mathcal G')/\log_2^rn$,
$|\mathcal T_i(\mathcal G_r)|\le |\mathcal T_i(\mathcal G_i)|\leq 2e(\mathcal G')/ \Delta_i$ for all $i\in [r]$,
and for each $T\in \mathcal T_i(\mathcal G_r)$ (which is also in $\mathcal T_i(\mathcal G_i)$),
$d_{\mathcal G_r}(T)\le d_{\mathcal G_i}(T)\leq \Delta_i$.

Next, we want to identify a subgraph $ \mathcal H\subseteq \mathcal G_r$ with a proper lower bound on $d_{\mathcal H}(T)$ for any $(r-1)$-tuple $T$ through the following deleting process.
Initially, let $ \mathcal H= \mathcal G_r$.
If there is an $(r-1)$-tuple $T\in \mathcal T_i(H)$ for some $i\in [r]$ with $d_{\mathcal H}(T)<\Delta_i/ 4r\log_2^rn$, then we delete all edges containing $T$, and denote the resulting hypergraph as $ \mathcal H$.
Repeat this process until either $ \mathcal H$ is empty or every $T\in \mathcal T_i(H)$ satisfies $d_{\mathcal H}(T)\geq \Delta_i/ 4r\log_2^rn$.
The number of edges we deleted is
$$e(\mathcal G_r)-e(\mathcal H)\le \sum_{ i\in [r]} | \mathcal T_i(\mathcal G_r)|\cdot \frac{\Delta_i}{4r\log_2^rn}
\le \sum_{ i\in [r]}  \frac{2e(\mathcal G')}{\Delta_i}\cdot \frac{\Delta_i}{4r\log_2^rn}=\frac{e(\mathcal G')}{2\log_2^rn}\le \frac{e(\mathcal G_r)}{2}.$$
Therefore, $\mathcal H$ is a non-empty subgraph of $\mathcal G_r\subseteq \mathcal G$ with
$$e(\mathcal H)\ge \frac{e(\mathcal G_r)}{2}\ge \frac{e(\mathcal G')}{2\log_2^rn}\ge \frac{n^{r-\frac{1}{s-1}- \varepsilon}}{(r\log_2n)^r}\ge n^{r-\frac{1}{s-1}- \varepsilon'}, \mbox{ where }\varepsilon'=\varepsilon+(\log_2 n)^{-1/2},$$
and $\Delta_i/4r\log_2^rn \le d_{\mathcal H}(T)\leq d_{\mathcal G_r}(T)\le \Delta_i$ for every $T\in \mathcal T_i(H)$.

To show $\mathcal H$ is the desired $\left(\varepsilon',4r\log_2^r n\right)$-regular subgraph of $\mathcal G$,
it remains to bound every $\Delta_i$ for $i\in [r]$.
For any $i\in [r]$, since $\Delta_i$ is the upper bound of $d_{\mathcal H}(T)$ for every $T\in \mathcal T_i(H)$,
it is clear that $$\Delta_i\ge e(\mathcal H)/n^{r-1}\ge n^{1-\frac{1}{s-1}- \varepsilon'}.$$
So it will suffice to show $\Delta_i\leq n^{1-\frac{1}{s-1}+\varepsilon'}$ for every $i\in [r]$.
By symmetry, we may assume that $\Delta_1$ is the maximum one among all $\Delta_i$.
Let us count the number $m$ of pairs $(S, T)$,
where $T\in \mathcal T_1(\mathcal H)$  and $S$ is an $s$-set in $N_{\mathcal H}(T)$.
Since $\sum_{T\in \mathcal  T_1(\mathcal H) }d_{\mathcal H}(T)= e(\mathcal H)$ and for each such $T$,
$d_{\mathcal H}(T)\ge \Delta_1/ 4r\log_2^{r }n\gg s$,
we can obtain that
$$m=  \sum_{T\in \mathcal  T^1(\mathcal H)} \binom{d_{\mathcal H}(T)}{s}
\ge e(\mathcal H) \cdot \left(\frac{d_{\mathcal H}(T)}{s}\right)^{s-1}
\ge \frac{n^{r-\frac{1}{s-1}- \varepsilon}}{(r\log_2n)^r}\cdot \left(\frac{\Delta_1}{4rs\log_2^{r }n } \right)^{s-1}.$$
By averaging, there exists an $s$-set $S$ which is contained in $N_{\mathcal H}(T)$ for at least $m/n^{s}$ many $(r-1)$-tuples $T$.
Using Lemma~\ref{lem: set degree}, we have
$$rt\Delta_1 n^{ r-3}\ge  cd_{\mathcal H}(S) \ge \frac{m}{n^s }
\ge \frac{n^{r-s-\frac{1}{s-1}-\varepsilon}\Delta_1^{s-1}}{r^r\cdot (4rs\log_2^{r}n)^{s}}.$$
As $n$ is sufficiently large,
this gives $\Delta_1^{s-2}\le n^{s-3+\frac{1}{s-1}+\varepsilon}\log_2^{rs +1 }n$.
As $s\geq 3$, it further implies that
$$\max_{i\in [r]} \Delta_i=\Delta_1\le n^{1 -\frac{1}{s-1}+ \frac{\varepsilon}{s-2}}\left(\log_2 n\right)^{\frac{rs+1}{s-2}}\le n^{1 -\frac{1}{s-1}+ \varepsilon'}.$$
Here, $\varepsilon'= \varepsilon + (\log_2n)^{-1/2}$.
Hence $\mathcal G$ has an $(\varepsilon',4r\log_2^r n)$-regular subgraph $\mathcal H$.
\end{proof}

We would like to point out that the proof of Lemma~\ref{lem: find regular subgraph} can be slightly modified to show that for any $r\geq 3$, $s,t\geq 2$ and sufficiently large $n$,
$\ex(n,K^{(r)}_{s,t})\leq n^{r-\frac{1}{s-1}}\log^{2r} n$ holds.

Applying a similar deletion argument given in the above proof, we can also derive the following.

\begin{lem} \label{coro: regular graph subgraph}
Let $\mathcal G$  be   an $(\varepsilon,\alpha)$-regular $K_{s,t}^{(r)}$-free $r$-uniform balanced $r$-partite hypergraph on $rn$ vertices.
Let $c>0$ be a constant.
If $\mathcal G'$ is a subgraph of $\mathcal G$ with at least $e(\mathcal G)/c$ edges,
then $\mathcal G'$ has an $(\varepsilon +\log_n{2c}, 2\alpha cr)$-regular subgraph $\mathcal H$.
\end{lem}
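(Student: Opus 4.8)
The plan is to mimic the deletion argument in the proof of Lemma~\ref{lem: find regular subgraph}, but starting already from an $(\varepsilon,\alpha)$-regular host $\mathcal G$, which makes several steps shorter. First I would record the data we already have: since $\mathcal G$ is $(\varepsilon,\alpha)$-regular, for each $i\in[r]$ there is $\Delta_i$ with $\Delta_i/\alpha\le d_{\mathcal G}(T)\le \Delta_i$ for all $T\in\mathcal T_i(\mathcal G)$, and $n^{1-\frac1{s-1}-\varepsilon}\le\Delta_i\le n^{1-\frac1{s-1}+\varepsilon}$. I claim we may take the \emph{same} constants $\Delta_i$ for the subgraph $\mathcal G'$: the upper bound $d_{\mathcal G'}(T)\le d_{\mathcal G}(T)\le\Delta_i$ is immediate, so the only real work is producing a matching lower bound on $d_{\mathcal H}(T)$ for a suitable subgraph $\mathcal H\subseteq\mathcal G'$. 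Note also that $e(\mathcal G')\ge e(\mathcal G)/c\ge n^{r-\frac1{s-1}-\varepsilon}/c\ge n^{r-\frac1{s-1}-\varepsilon-\log_n c}$, and since $|\mathcal T_i(\mathcal G')|\le |\mathcal T_i(\mathcal G)|\le e(\mathcal G)/(\Delta_i/\alpha)=\alpha\, e(\mathcal G)/\Delta_i$, we have the bookkeeping inequality $|\mathcal T_i(\mathcal G')|\le \alpha c\, e(\mathcal G')/\Delta_i$ for every $i$.

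Next I would run the cleaning step. Set $\mathcal H=\mathcal G'$ initially; as long as there is an $(r-1)$-tuple $T\in\mathcal T_i(\mathcal H)$ for some $i\in[r]$ with $d_{\mathcal H}(T)<\Delta_i/(2\alpha c r)$, delete all edges through $T$ and update $\mathcal H$. The total number of edges removed is at most
\[
\sum_{i\in[r]}|\mathcal T_i(\mathcal G')|\cdot\frac{\Delta_i}{2\alpha c r}
\le\sum_{i\in[r]}\frac{\alpha c\, e(\mathcal G')}{\Delta_i}\cdot\frac{\Delta_i}{2\alpha c r}
=\frac{e(\mathcal G')}{2},
\]
so the resulting $\mathcal H$ is non-empty with $e(\mathcal H)\ge e(\mathcal G')/2\ge e(\mathcal G)/(2c)\ge n^{r-\frac1{s-1}-\varepsilon-\log_n(2c)}$, and by construction $\Delta_i/(2\alpha c r)\le d_{\mathcal H}(T)\le\Delta_i$ for every $T\in\mathcal T_i(\mathcal H)$. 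Writing $\varepsilon'=\varepsilon+\log_n(2c)$ and $\alpha'=2\alpha c r$, the degree condition reads $\Delta_i/\alpha'\le d_{\mathcal H}(T)\le\Delta_i$, and $e(\mathcal H)\ge n^{r-\frac1{s-1}-\varepsilon'}$.

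It remains to check that the same $\Delta_i$ still satisfy the window $n^{1-\frac1{s-1}-\varepsilon'}\le\Delta_i\le n^{1-\frac1{s-1}+\varepsilon'}$ required by Definition~\ref{def: regular}. The upper bound is inherited verbatim from $\mathcal G$ since $\Delta_i\le n^{1-\frac1{s-1}+\varepsilon}\le n^{1-\frac1{s-1}+\varepsilon'}$. For the lower bound, $\Delta_i$ is an upper bound for the degrees of $(r-1)$-tuples in $\mathcal T_i(\mathcal H)$, so $\Delta_i\ge e(\mathcal H)/n^{r-1}\ge n^{1-\frac1{s-1}-\varepsilon'}$, exactly as in the proof of Lemma~\ref{lem: find regular subgraph}. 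Hence $\mathcal H$ is $(\varepsilon',\alpha')=(\varepsilon+\log_n(2c),\,2\alpha c r)$-regular, as claimed.

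The only point that needs a little care — and the place I would expect the "main obstacle," though it is mild — is that here we do \emph{not} re-partition the degrees into dyadic classes (that was the step in Lemma~\ref{lem: find regular subgraph} that cost a $\log$ factor and forced a fresh choice of $\Delta_i$); instead we reuse the $\Delta_i$ coming from $(\varepsilon,\alpha)$-regularity of $\mathcal G$, which is what lets the loss stay multiplicative of order $\alpha c r$ rather than involving another logarithmic factor. One should double-check that the lower bound $\Delta_i\ge e(\mathcal H)/n^{r-1}$ is genuinely available, i.e.\ that $\mathcal T_i(\mathcal H)\ne\emptyset$ whenever $\mathcal H$ has an edge meeting $V(\mathcal H)\setminus V_i$ — which holds since $\mathcal H$ is balanced $r$-partite and non-empty — so every edge of $\mathcal H$ is counted with multiplicity one in $\sum_{T\in\mathcal T_i(\mathcal H)}d_{\mathcal H}(T)=e(\mathcal H)$, giving the averaging bound. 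With that observed, the argument is complete.
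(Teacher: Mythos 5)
Your proposal is correct and follows essentially the same deletion argument as the paper: repeatedly remove all edges through any $(r-1)$-tuple of low degree, bound the total loss by $e(\mathcal G')/2$, and reuse the constants $\Delta_i$ from the regularity of $\mathcal G$. The only (immaterial) difference is that you use the absolute threshold $\Delta_i/(2\alpha cr)$ together with the bound $|\mathcal T_i(\mathcal G)|\le \alpha e(\mathcal G)/\Delta_i$, whereas the paper deletes $T$ when $d_{\mathcal H}(T)<d_{\mathcal G}(T)/(2cr)$ and sums $d_{\mathcal G}(T)/(2cr)$ over all tuples; both yield the same guarantee $\Delta_i/(2\alpha cr)\le d_{\mathcal H}(T)\le\Delta_i$.
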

\begin{proof}
We apply the deletion argument to get a subgraph $\mathcal H\subseteq \mathcal G'$ as follows.
Initially set $\mathcal H= \mathcal G'$.
If $\mathcal H$ has an $(r-1)$-tuple $T$ with $d_{\mathcal H}(T)<d_{\mathcal G}(T)/2cr$,
then we delete all edges containing $T$, and still denote the resulting hypergraph as $\mathcal H$.
Repeat this process until $ \mathcal H$ is empty or every $(r-1)$-tuple $T$ of $\mathcal H$ satisfies that $d_{\mathcal H}(T)\geq d_{\mathcal G}(T)/2cr$.
The number of edges we deleted is
$$e(\mathcal G')-e(\mathcal H)\le \sum_{ T\in \mathcal T(\mathcal G) } \frac{ d_{\mathcal G}(T)}{2cr} = \frac{r\cdot e(\mathcal G)}{2cr} = \frac{e(\mathcal G')}{2}.$$
So we have $e(\mathcal H)\ge \frac{e(\mathcal G')}{2}\ge \frac{e(\mathcal G )}{2c} \ge n^{r-\frac{1}{s-1}-   \varepsilon - \log_n{2c}}$  and $d_{\mathcal G}(T)/2cr \le  d_{\mathcal H}(T) \le d_{\mathcal G}(T)$ for each $T\in \mathcal T (\mathcal H)$.
Since $\mathcal G$ is $(\varepsilon,\alpha)$-regular,
there exists $\Delta_i\in [n^{1-\frac{1}{s-1}-\varepsilon},n^{1-\frac{1}{s-1}+\varepsilon}]$
such that for each $T\in \mathcal T_i(\mathcal G)$, $\Delta_i/\alpha \le d_{\mathcal G}(T)\le \Delta_i$.
This implies that for each $T\in \mathcal T_i (\mathcal H)$, we have
$\Delta_i/2\alpha cr \le d_{\mathcal G}(T)/2cr \le  d_{\mathcal H}(T) \le d_{\mathcal G}(T)\le \Delta_i$.
Therefore, $\mathcal H$ is an $(\varepsilon +\log_n{2c},  2\alpha cr)$-regular subgraph of $\mathcal G'$.
\end{proof}

\subsection{Finding $\delta$-dense structures: Property (I)}
In this subsection, we prove Lemma~\ref{lem: dense subgraph}.

\begin{proof}[\bf Proof of Lemma~\ref{lem: dense subgraph}]
Let $\varepsilon\in (0,1)$ and $\alpha>0$ satisfy that $\alpha=o\left(n^{\varepsilon/s}\right)$.
Let $\mathcal G$ be an $(\varepsilon,\alpha)$-regular $K_{s,t}^{(r)}$-free $r$-uniform $r$-partite hypergraph with parts $V_1,\cdots , V_{r }$, each of size $n$.
Without loss of generality, we may assume that $j=r$.
Our goal is to show that there exists an $(\varepsilon+\log_n4r,4r^2\alpha)$-regular subgraph $\mathcal H\subseteq \mathcal G$ and an integer $i\in [r]\backslash \{r\}$ such that
$ V_i \xrightarrow[(s+1)\varepsilon]{\mathcal H, \mathcal G } V_r.$

Let us first introduce some notation needed in this proof.
For an $s$-set $S$ and an $(r-1)$-tuple $T$ in $\mathcal G$ with $S\subseteq N_{\mathcal G}(T)$,
we define the {\it co-degree of $S$ under $T$} as follows:
$$ cd_{\mathcal G}(S | T )=\max  \{ cd_{\mathcal G}(S|u): u \in T \text{ and }  u \text{ is a root of } S \}.\footnote{Note that this is well-defined as there exists at least one vertex $u\in T$ which is a root of $S$.}$$
We say the pair $(S,T)$ is {\it small} if $cd_{\mathcal G}(S |T )< n^{r-2-\frac{1}{s-1}- (s+1)\varepsilon}$ and {\it large} otherwise.

We define an auxiliary function $f: E(\mathcal G)\to \{0,1,\cdots, r-1\}$ on the edges of $\mathcal G$ as follows.
For an edge $e=\{v_1,\cdots, v_r\}\in E(\mathcal G)$ with $v_i\in V_i$,
let $T_e=\{v_1, \cdots ,v_{r-1}\}$ and let $\mathcal{S}(e)$ be the family consisting of all $s$-sets $S$ satisfying that $v_r\in S \subseteq N_{\mathcal G}(T_e)$.
We define $f(e)=0$ if there are at least $(n^{1-\frac{1}{s-1}- \varepsilon}/(\alpha\log_2 n) )^{s-1}= n^{s-2-  (s-1)\varepsilon}/(\alpha\log_2 n)^ { s-1}$  many  $s$-sets $S\in \mathcal S(e)$ such that the pair $(S,T_e)$ is small.
Subsequently, if $f(e)\neq 0$, considering that $d_{\mathcal G}(T_e)\geq n^{1-\frac{1}{s-1}- \varepsilon}/\alpha$ ,
there are $(1-o(1))d_{\mathcal G}(T_e)^{s-1}$ many $s$-sets $S\in \mathcal{S}(e)$ such that $(S,T_e)$ is large.
Note that in this case, for every $s$-set $S\in \mathcal{S}(e)$ with large $(S,T_e)$, there exists a root of $S$, denoted as $v_k\in T_e$, satisfying $cd_{\mathcal G}(S|v_k)\geq n^{r-2-\frac{1}{s-1}- (s+1)\varepsilon}$.
We will refer to such $S\in \mathcal{S}(e)$ as having {\it index $k$}, where $k\in \{1,\cdots, r-1\}$.\footnote{If there are multiple choices of $k$, we arbitrarily select one of them.}
Let $\ell$ be the index such that the number of $s$-sets $S\in \mathcal{S}(e)$ with index $\ell$ is maximum among all indices in $\{1,\cdots, r-1\}$.
By averaging, this number is at least $(1-o(1))d_{\mathcal G}(T_e)^{s-1}/(r-1)\ge d_{\mathcal G}(T_e)^{s-1}/r$.
In this case, we define $f(e)=\ell$ and according to Definition~\ref{Def:delta-dense}, $(T_e; v_r) $ is $(s+1)\varepsilon$-dense on $v_\ell$ in $\mathcal G$.

Let $m$ be the number of edges $e$ in $\mathcal G$ with $f(e)=0$.
Now we demonstrate that to complete this proof, it is sufficient to show that $m=o(e(\mathcal G))$.
Suppose indeed $m=o(e(\mathcal G))$.
Then by averaging, there exists an integer $i\in \{1,\cdots, r-1\}$ such that there are at least $(1-o(1))e(\mathcal G)/(r-1)\ge e(\mathcal G)/r $ many edges $e\in E(\mathcal G)$ with $f(e)=i$.
By applying Lemma~\ref{coro: regular graph subgraph} (with the constant $c=r$),
one can get an $(\varepsilon+\log_n 4r,4r^2\alpha)$-regular subgraph $\mathcal H$ from these edges.
Every edge $e$ in $\mathcal H$ has $f(e)=i$, which also means that $(e\backslash V_r; e\cap V_r) $ is $(s+1)\varepsilon$-dense on the vertex $e\cap  V_i$ in $\mathcal G$. Therefore, $V_i \xrightarrow[(s+1)\varepsilon]{\mathcal{H},\mathcal{G}} V_r$ holds
and $\mathcal H$ is the desired subgraph of $\mathcal G$.

It remains to show that $m=o(e(\mathcal G))$.
We count the number of pairs $(S, e)$, where $S$ is an $s$-set in $V_r$ and $e\in E(\mathcal G)$ such that $(S,T_e)$ is small.
By definition of $f(e)=0$, we see that this number, denoted by $M$, is at least $m\cdot n^{s-2- (s-1)\varepsilon}/(\alpha\log_2 n)^ { s-1}$.
Now fix an $s$-set $S_0$.
Recall the definition of smallness.
Any $(r-1)$-tuple $T$ such that the pair $(S_0,T)$ is small must contain a root $u$ of $S$ with
$cd_{\mathcal G}(S|u)< n^{r-2-\frac{1}{s-1}- (s+1)\varepsilon}$; call such root $u$ {\it small}.
Thus by \eqref{equ:roots}, the number of such $(r-1)$-tuples $T$ is at most
$\sum_{u} cd_{\mathcal G}(S|u)< rtn^{r-2-\frac{1}{s-1}- (s+1)\varepsilon}$,
where the summation is over all small roots $u$ of $S$.
There are at most $n^s$ choices of $s$-sets $S_0$,
and each small pair $(S_0,T )$ can contribute $s$ pairs $(S_0,e)$ with $T\subseteq e\subseteq S_0\cup T$ to the counting $M$.
Thus we have
$$m\cdot n^{s-2- (s-1)\varepsilon}/(\alpha\log_2 n)^ { s-1}\leq M < sn^s\cdot rt   n^{r-2-\frac{1}{s-1}- (s+1)\varepsilon}.$$
Note that $(\alpha\log_2 n)^{s-1}= o\left( (n^{\varepsilon/s} \log_2 n) ^{s-1}\right)=o\left(n^{\varepsilon }  \right)$  and $e(\mathcal G)\ge n^{r-\frac{1}{s-1}- \varepsilon}$.
So the above inequality implies that $m\le srt\cdot (\alpha\log_2 n)^ { s-1}\cdot n^{r -\frac{1}{s-1}- 2\varepsilon}= o(e(\mathcal G))$.
We have proved Lemma~\ref{lem: dense subgraph}.
\end{proof}

\subsection{Finding $\delta$-dense structures: Property (II)}
In this subsection, we prove Lemma~\ref{lem: 1root2root3}.
Before presenting the proof, we need to establish two technical lemmas.
The first one involves some averaging statements for bipartite graphs.

\begin{lem}\label{lem: bipartite average}
Let $G=(A,B)$ be a bipartite graph with $e(G)\geq \rho|A||B|$ for some $\rho\in (0,1)$.
\begin{itemize}
  \item[(1).] There are at least $\rho |A|/2$ vertices $a\in A$ with $|N_G(a)\cap B |\ge \rho |B |/2$.
  \item[(2).] Let $s$ be a positive integer. If $\rho |A|\gg s$, then there are at least $(\rho |A|)^s/(3s!)$  many $s$-sets in $A$ that have at least $\rho^s B/3$ common neighbors in $G$.
\end{itemize}
\end{lem}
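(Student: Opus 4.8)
The plan is to prove both parts by elementary double counting, with no appeal to the hypergraph structure. For part (1), I would isolate the set $A_0:=\{a\in A: |N_G(a)\cap B|<\rho|B|/2\}$ of low-degree vertices and bound $e(G)$ from above by splitting the degree sum: edges incident to $A_0$ number at most $|A_0|\cdot \rho|B|/2\le |A|\cdot\rho|B|/2$, while edges incident to $A\setminus A_0$ number at most $|A\setminus A_0|\cdot|B|$. Comparing with the hypothesis $e(G)\ge \rho|A||B|$ and cancelling a factor $|B|$ gives $|A\setminus A_0|\ge \rho|A|/2$, which is exactly the claim, since each vertex of $A\setminus A_0$ has $|N_G(a)\cap B|\ge \rho|B|/2$.

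For part (2), I would double count the number $P$ of pairs $(S,b)$ with $b\in B$ and $S$ an $s$-subset of $N_G(b)\cap A$; writing $d_G(b):=|N_G(b)\cap A|$, we have $P=\sum_{b\in B}\binom{d_G(b)}{s}$. By convexity of $x\mapsto\binom{x}{s}$ on $[s-1,\infty)$ (noting $\binom{d}{s}=0$ for integers $d<s$, so that on the integers this function agrees with the genuinely convex $x\mapsto\binom{\max(x,s-1)}{s}$ and Jensen applies), $P\ge |B|\binom{\bar d}{s}$ where $\bar d:=e(G)/|B|\ge \rho|A|$; since $\rho|A|\gg s$ this yields $P\ge(1-o(1))|B|(\rho|A|)^s/s!$. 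On the other hand, call an $s$-set $S\subseteq A$ \emph{poor} if it has fewer than $\rho^s|B|/3$ common neighbours; there are at most $\binom{|A|}{s}\le |A|^s/s!$ of them, so poor $s$-sets contribute less than $\frac{|A|^s}{s!}\cdot\frac{\rho^s|B|}{3}=\frac{(\rho|A|)^s|B|}{3s!}$ to $P$. Subtracting, the non-poor $s$-sets contribute at least $(\tfrac23-o(1))\frac{(\rho|A|)^s|B|}{s!}$ to $P$; since each one contributes at most $|B|$ (it has at most $|B|$ common neighbours), there are at least $(\tfrac23-o(1))\frac{(\rho|A|)^s}{s!}\ge \frac{(\rho|A|)^s}{3s!}$ of them once $\rho|A|$ is large enough in terms of $s$.

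Neither part is hard; the only step needing a little care is the convexity estimate in part (2) — in particular justifying that the integer-degree sum $\sum_b\binom{d_G(b)}{s}$ is bounded below by $|B|\binom{\bar d}{s}$, and then checking that $\rho|A|\gg s$ forces $\binom{\rho|A|}{s}\ge(1-o(1))(\rho|A|)^s/s!$ and absorbs the $o(1)$ losses so that the constant $\tfrac13$ (rather than the $\tfrac23$ or $\tfrac12$ one actually obtains) is comfortably attained. Everything else is routine bookkeeping.
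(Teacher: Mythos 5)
Your proof is correct and follows essentially the same route as the paper: part (1) is the same degree-sum split over high- and low-degree vertices of $A$, and part (2) rests on the same double count $P=\sum_{b\in B}\binom{d_G(b)}{s}$ with the same Jensen/convexity step. The only cosmetic difference is in how part (2) concludes --- the paper applies part (1) to an auxiliary bipartite graph between $B$ and the $s$-subsets of $A$, whereas you subtract the contribution of the ``poor'' $s$-sets directly (which in fact yields a marginally stronger intermediate fraction, $2/3$ rather than $1/2$, before both arguments comfortably reach the stated bound $(\rho|A|)^s/(3s!)$).
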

\begin{proof}
For (1), let $A'$ be the set of vertices $a\subseteq A$ with $|N(a)\cap B |\ge \rho |B|/2$.
Then we have $\rho|A||B|\leq e(G)\leq |A'||B| + (|A|-|A'|)\rho |B|/2$.
This gives that $\rho|A||B |/2\le (1-\rho/2)|A'||B|$ and thus $|A'|\ge \rho |A|/(2-\rho)\ge \rho |A|/2$, as desired.

For (2), we construct an auxiliary bipartite graph $H$ from $G$ as follows.
The two parts of $H$ are $B$ and $\mathcal C$, where $\mathcal C$ denotes the family of all $s$-sets in $A$.
Let $bc\in E(H)$ if and only if $b\in B$ and $c\in \mathcal C$ form a $K_{1,s}$ in $G$.
In view of $e(G)= \rho|A||B|$ and $\rho |A|\gg s$, by Jensen's inequality, we have
$$e(H)  =\sum_{b\in B}\binom{d_G(b)}{s}\ge |B|\binom{ \rho|A|}{s} =  (1-o(1)) \rho^s|\mathcal C||B|, \mbox{ where } |\mathcal C|=\binom{|A|}{s}.$$
By applying the conclusion (1) for $H$, there are at least $(1-o(1)) \rho^s|\mathcal C|/2\geq (\rho |A|)^s/(3s!)$  many  vertices $c\in \mathcal C$ with $|N_H(c)\cap B |\ge (1-o(1))\rho^s  |B |/2 \ge \rho^s |B |/3$. This completes the proof.
\end{proof}

The following lemma provides the crucial techniques for proving Lemma~\ref{lem: 1root2root3}.
Roughly speaking, given the assumption $V_1 \xrightarrow[\delta]{\mathcal H, \mathcal G} V_2$,
it reveals some dense structures concerning the ``adjacency'' between $V_2$ and $V_1$, as well as between $V_2$ and any predetermined part $V_j$.

\begin{lem}\label{lem: reduce to bipartite}
Let $\varepsilon, \delta, \alpha>0$ satisfy $6(s+1)(\varepsilon + \delta)\le 1$ and $\alpha=o\left(n^{\varepsilon}\right)$.
Let $\mathcal G$ be an $(\varepsilon,\alpha)$-regular $K_{s,t}^{(r)}$-free $r$-uniform balanced $r$-partite hypergraph on $rn$ vertices with parts $V_1,\cdots, V_r$.

Fix $T=\{v_1,v_3,\cdots, v_r\}\in \mathcal T_2(\mathcal G)$, where $v_i\in V_i$ for $i\in [r]\backslash \{2\}$.
Let $X$ be a subset of $N_{\mathcal G}(T)\subseteq V_2$ such that for every vertex $v\in X$, $(T;v)$ is $\delta $-dense on $v_1$ in $\mathcal G$.
Then the following hold.
\begin{itemize}
\item[(1).] If $|X|\gg n^{\varepsilon+\delta}$, then $X$ contains at least $n^{-s\varepsilon-s\delta }|X|^s/(3s!r^s)$ different $s$-sets rooted on $v_{1}$.
\item[(2).] Suppose $X\neq \emptyset$. Then for any given $j\in [r]\backslash \{1,2\}$,
there exist subsets $Y\subseteq N_{\mathcal G}(T)$, $Z\subseteq V_j$, and an $(r-3)$-tuple $R\subseteq V(\mathcal G)\backslash (V_1\cup V_2\cup V_j)$ such that $|Y|\ge n^{1-\frac{1}{s-1}- 2 \varepsilon -\delta}/2r\alpha$, $n^{1-\frac{1}{s-1}-\delta} \le |Z| \le n^{1-\frac{1}{s-1} +\varepsilon}$, and for any $y \in Y$, $|N_{\mathcal G}(\{v_1, y\}\cup R)\cap Z|\ge n^{1-\frac{1}{s-1} - \varepsilon-2\delta}/2$.
\end{itemize}
\end{lem}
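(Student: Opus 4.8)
The plan is to prove both conclusions by converting the $\delta$-density hypothesis into a dense bipartite configuration and feeding it into Lemma~\ref{lem: bipartite average}, after first disposing of the ``footnote fact'' that a large value of $cd_{\mathcal G}(S|u)$ forces $u$ to be a root of $S$. For the footnote fact: if $u$ is not a root of an $s$-set $S\subseteq V_2$, then in its fixed minimum vertex cover $R(S)$ every edge of $\CN_{\mathcal G}(S)$ through $u$ meets some $w\in R(S)\setminus\{u\}$, so $cd_{\mathcal G}(S|u)\le\sum_{w\in R(S)}|\{e\in\CN_{\mathcal G}(S):u,w\in e\}|$; fixing any $v\in S$, each term is at most $d_{\mathcal G}(\{u,w,v\})\le n^{r-3-\frac1{s-1}+\varepsilon}$ by Lemma~\ref{lem: set degree} (the $3$-tuple $\{u,w,v\}$ spans three distinct parts since $u,w\notin V_2\ni v$), and with $|R(S)|<rt$ from \eqref{equ:roots} this gives $cd_{\mathcal G}(S|u)<rt\,n^{r-3-\frac1{s-1}+\varepsilon}$. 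Under $6(s+1)(\varepsilon+\delta)\le 1$ and for large $n$ this is smaller than $n^{r-2-\frac1{s-1}-\delta}$, so every $s$-set with $cd_{\mathcal G}(\cdot|v_1)\ge n^{r-2-\frac1{s-1}-\delta}$ is rooted on $v_1$. I will also record the consequence that, choosing for each $y\in X$ one of the $s$-sets $S\ni y$ supplied by $\delta$-density and using $cd_{\mathcal G}(S|v_1)\le d_{\mathcal G}(\{v_1,y\})$, we get $d_{\mathcal G}(\{v_1,y\})\ge n^{r-2-\frac1{s-1}-\delta}$ for every $y\in X$.

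For (1), I would form the bipartite graph $G$ with parts $A=X$ and $B$ the family of $(r-1)$-tuples $e\in\mathcal T_2(\mathcal G)$ with $v_1\in e$, joining $y\in X$ to $e$ when $e\cup\{y\}\in E(\mathcal G)$; then $\deg_G(y)=d_{\mathcal G}(\{v_1,y\})$, so $e(G)\ge|X|\,n^{r-2-\frac1{s-1}-\delta}$, while $|B|$ is controlled by counting edges through $v_1$ ($\sum_{e\in B}d_{\mathcal G}(e)=d_{\mathcal G}(v_1)\le n^{r-1-\frac1{s-1}+\varepsilon}$) and using $(\varepsilon,\alpha)$-regularity ($d_{\mathcal G}(e)\ge\Delta_2/\alpha$ on $B$). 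The common neighbourhood in $B$ of an $s$-set $\{y_1,\dots,y_s\}\subseteq X$ is exactly $\{e:y_1,\dots,y_s\in N_{\mathcal G}(e)\}$, of size $cd_{\mathcal G}(\{y_1,\dots,y_s\}|v_1)$, so Lemma~\ref{lem: bipartite average}(2) applied to $G$ produces $\gtrsim(\rho|X|)^s/(3s!)$ many $s$-sets of $X$ whose common neighbourhood has size $\ge\rho^s|B|/3$; by the footnote fact these $s$-sets are all rooted on $v_1$, and propagating the density $\rho=e(G)/(|X||B|)$ through the above estimates yields the stated count $n^{-s\varepsilon-s\delta}|X|^s/(3s!r^s)$. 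The delicate point here is organising the choice of $B$ and the exponent arithmetic so that simultaneously $\rho|X|\gg s$ (for the hypothesis $|X|\gg n^{\varepsilon+\delta}$ to suffice) and $\rho^s|B|/3$ clears the root threshold $rt\,n^{r-3-\frac1{s-1}+\varepsilon}$ — this is where $\alpha=o(n^\varepsilon)$ and $6(s+1)(\varepsilon+\delta)\le 1$ are used.

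For (2), since $X\neq\emptyset$ pick $v\in X$; by $\delta$-density there is an $s$-set $S$ with $v\in S\subseteq N_{\mathcal G}(T)$ and $cd_{\mathcal G}(S|v_1)\ge n^{r-2-\frac1{s-1}-\delta}$, so the family $\mathcal E$ of $(r-1)$-tuples $e$ with $v_1\in e$ and $S\subseteq N_{\mathcal G}(e)$ has $|\mathcal E|\ge n^{r-2-\frac1{s-1}-\delta}$, each lying in $\mathcal T_2(\mathcal G)$. Writing $e=\{v_1,z_e\}\cup R_e$ with $z_e\in V_j$ and $R_e$ an $(r-3)$-set avoiding $V_1\cup V_2\cup V_j$, averaging over the $\le n^{r-3}$ choices of $R_e$ fixes an $(r-3)$-tuple $R$ with at least $n^{1-\frac1{s-1}-\delta}$ tuples $e=\{v_1,z\}\cup R$ in $\mathcal E$; set $Z=\{z\in V_j:\{v_1,z\}\cup R\in\mathcal E\}$, so $|Z|\ge n^{1-\frac1{s-1}-\delta}$, and $Z\subseteq N_{\mathcal G}(\{v_1,v\}\cup R)$ with $\{v_1,v\}\cup R\in\mathcal T_j(\mathcal G)$ gives $|Z|\le\Delta_j\le n^{1-\frac1{s-1}+\varepsilon}$. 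To produce $Y$ I would work in the bipartite graph $G_0=N_{\mathcal G}(\{v_1\}\cup R)$ between $V_2$ and $V_j$: for $y\in V_2$ one has $N_{\mathcal G}(\{v_1,y\}\cup R)\cap Z=N_{G_0}(y)\cap Z$, while for $z\in Z$ the tuple $\{v_1,z\}\cup R$ lies in $\mathcal T_2(\mathcal G)$ (its link contains $S$), hence $\deg_{G_0}(z)\ge\Delta_2/\alpha$ and $\sum_{z\in Z}|N_{G_0}(z)\cap V_2|\ge|Z|\Delta_2/\alpha$. Combining this edge-count with the regularity bounds on $G_0$ (every positive $G_0$-degree on $V_2$ is at least $\Delta_j/\alpha$, and $e(G_0)=d_{\mathcal G}(\{v_1\}\cup R)\le n^{2-\frac1{s-1}+\varepsilon}$ by Lemma~\ref{lem: set degree}) and localising to the $y$'s reached by $Z$, one extracts $\gtrsim n^{1-\frac1{s-1}-2\varepsilon-\delta}/\alpha$ vertices $y$ with $|N_{\mathcal G}(\{v_1,y\}\cup R)\cap Z|\ge n^{1-\frac1{s-1}-\varepsilon-2\delta}/2$; restricting these to $N_{\mathcal G}(T)$ costs a further factor at most $r$ and produces the required $Y$.

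The step I expect to be the main obstacle, in both parts, is the quantitative bookkeeping: the target bounds are polynomially stronger (by roughly a factor $n^{1/(s-1)}$ — e.g. the exponent $1-\frac1{s-1}$ in part (2) rather than the $1-\frac2{s-1}$ that a naive averaging between two $n^{1-\frac1{s-1}}$-sized sets in a universe of size $n$ would give) than a ``random-like'' estimate delivers, so one genuinely has to exploit the full strength of $\delta$-density — that each relevant vertex lies in \emph{many} $s$-sets, each of \emph{near-maximal} codegree with respect to $v_1$ — rather than a single witness, and must track the regularity parameters $(\varepsilon,\alpha)$ and the interaction of the exponents so that the accumulated slack stays below $1$. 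Concretely, checking that $\rho|X|\gg s$ in part (1) when $|X|$ is as small as $n^{\varepsilon+\delta}$, and that the per-vertex intersection bound in part (2) actually attains the exponent $1-\frac1{s-1}$, are the places where the argument must be arranged with the most care and where the hypotheses $\alpha=o(n^\varepsilon)$ and $6(s+1)(\varepsilon+\delta)\le 1$ are essential.
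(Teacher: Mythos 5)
Your handling of the footnote fact is correct, and in part (2) your extraction of $R$ and $Z$ (averaging the $\ge n^{r-2-\frac1{s-1}-\delta}$ tuples of $\CN_{\mathcal G}(S)$ through $v_1$ over the $\le n^{r-3}$ choices of $(r-3)$-tuple) matches the paper and gives the right bounds on $|Z|$. But both of your bipartite reductions have a quantitative gap that is fatal, and it is precisely the one you flagged at the end: you extract from $\delta$-density only a \emph{single} witness $s$-set per vertex, hence only the bound $d_{\mathcal G}(\{v_1,y\})\ge n^{r-2-\frac1{s-1}-\delta}$, and this loses a factor $n^{1/(s-1)}$ in the bipartite density. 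In part (1), with $B$ the set of all $(r-1)$-tuples through $v_1$ one only gets $|B|\le \alpha\, d_{\mathcal G}(v_1)/\Delta_2\approx \alpha n^{r-2+2\varepsilon}$, so $\rho=e(G)/(|X||B|)\approx n^{-\frac1{s-1}-2\varepsilon-\delta}/\alpha$; then the common neighbourhood guaranteed by Lemma~\ref{lem: bipartite average}(2) is $\rho^s|B|/3=\bigl(e(G)/|X|\bigr)^s/\bigl(3|B|^{s-1}\bigr)\approx n^{r-3-\frac1{s-1}-2(s-1)\varepsilon-s\delta}\alpha^{1-s}$, which lies \emph{below} the root threshold $rt\,n^{r-3-\frac1{s-1}+\varepsilon}$, so you cannot conclude these $s$-sets are rooted on $v_1$; moreover $(\rho|X|)^s$ misses the required count of $s$-sets by roughly $n^{s/(s-1)}$, and the hypothesis $\rho|X|\gg s$ would force $|X|\gg \alpha n^{\frac1{s-1}+2\varepsilon+\delta}$ rather than $|X|\gg n^{\varepsilon+\delta}$. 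In part (2), summing $\deg_{G_0}(z)\ge\Delta_2/\alpha$ over $z\in Z$ and averaging over all $n$ vertices of $V_2$ gives density only $\approx n^{-\frac1{s-1}-\varepsilon}/\alpha$ between $V_2$ and $Z$, hence a per-vertex intersection of $\approx n^{1-\frac2{s-1}-\varepsilon-\delta}/\alpha$, a factor $n^{1/(s-1)}$ short of the target $n^{1-\frac1{s-1}-\varepsilon-2\delta}/2$; and the vertices you find range over all of $V_2$, so "restricting to $N_{\mathcal G}(T)$" can cost a factor $n^{1/(s-1)}$ in $|Y|$, not a factor $r$.

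The missing mechanism is the one the paper builds first. For $v\in X$ let $\mathcal A(v)$ be the family of $\ge d_{\mathcal G}(T)^{s-1}/r$ witness $s$-sets and set $A(v)=\bigcup_{S\in\mathcal A(v)}S\subseteq N_{\mathcal G}(T)$; then $|A(v)|\ge d_{\mathcal G}(T)/r$, and the crucial pairwise property is that every $u\in A(v)$ lies with $v$ in some $S\in\mathcal A(v)$, whence $|N_{\mathcal G}(\{u,v_1\})\cap N_{\mathcal G}(\{v,v_1\})|\ge n^{r-2-\frac1{s-1}-\delta}$. For (1) one pigeonholes a single $u_0$ lying in $A(v)$ for at least $|X|/r$ of the $v\in X$ and takes the right-hand side of the bipartite graph to be $N_{\mathcal G}(\{u_0,v_1\})$, of size at most $n^{r-2-\frac1{s-1}+\varepsilon}$ by Lemma~\ref{lem: set degree}; the density becomes $n^{-\varepsilon-\delta}$, and then both the count of $s$-sets and the root threshold clear comfortably under $6(s+1)(\varepsilon+\delta)\le 1$. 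For (2) one takes the left-hand side to be $A(v)$ (which yields $Y\subseteq N_{\mathcal G}(T)$ for free) and the right-hand side to be $N_{\mathcal G}(\{v,v_1\})$ \emph{before} averaging over $R$; the density is again $n^{-\varepsilon-\delta}$ and Lemma~\ref{lem: bipartite average}(1) delivers the exponent $1-\frac1{s-1}$ in the per-vertex intersection. Without this use of the full family $\mathcal A(v)$ the stated exponents are not attainable by your argument.
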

\begin{proof}
We fix such an $(r-1)$-tuple $T$.
For each $v\in N_{\mathcal G}(T)$, let $\mathcal A(v)$ be the set of $s$-sets $S$ such that $v\in S\subseteq N_{\mathcal G}(T)$ and $cd_{\mathcal G}(S|v_{1})\ge n^{r-2-\frac{1}{s-1}- \delta}$.
If $v\in X$, then $(T;v)$ is $\delta $-dense on $v_1$ in $\mathcal G$ and thus $|\mathcal A(v)|\ge d_{\mathcal G}(T)^{s-1}/ r$.
Let $A(v)=\bigcup_{S\in \mathcal A(v)}S$.
Then $A(v)\subseteq N_{\mathcal G}(T)$, and as clearly $\binom{|A(v)|}{s-1}\geq |\mathcal A(v)|$, we have $|A(v)|\ge |\mathcal A(v)|^{1/(s-1)} \ge d_{\mathcal G}(T)/r$.
Note that for each $u\in A(v)$,
there exists some $S\in \mathcal A(v)$ with $\{u,v\}\subseteq S$.
This implies that for each $u\in A(v)$,
\begin{equation}\label{equ:d(u,v1)}
|N_{\mathcal G}(\{u,v_{1}\})\cap N_{\mathcal G}(\{v,v_{1}\})|\ge cd_{\mathcal G}(S|v_{1})\ge n^{r-2-\frac{1}{s-1} -\delta}.
\end{equation}

We first consider the conclusion (1).
We have seen that for all $v\in X$, $A(v)\subseteq N_{\mathcal G}(T)$ has size at least $d_{\mathcal G}(T)/r$.
By averaging, there exists $u_0\in N_{\mathcal G}(T)$ and a subset $X'\subseteq X$ with $|X'|\ge |X|/r$ such that $u_0\in A(v)$ for every $v\in X'$.
Let $\mathcal B= N_{\mathcal G}(\{u_0,v_{1}\})$ be a subset of $(r-2)$-tuples.
Then we have $$n^{r-2-\frac{1}{s-1} -\delta}\le |\mathcal B|\le n^{r-2-\frac{1}{s-1}+ \varepsilon},$$
where the first inequality holds by \eqref{equ:d(u,v1)} and the second inequality is given by Lemma~\ref{lem: set degree}.
We now define a bipartite graph $H=(X',\mathcal B)$ as follows. For $v\in X'$ and $B\in \mathcal B$,
we define $vB\in E(H)$ if and only if $\{v,v_{1}\}\cup B\in E(\mathcal G)$.
Note that it means $N_H(v)=\mathcal B\cap N_\mathcal{G}(\{v,v_1\})$ for $v\in X'$.
By \eqref{equ:d(u,v1)}, we see that each $v\in X'$ has degree at least $n^{r-2-\frac{1}{s-1} -\delta}$ in $H$.
Consequently, $e(H)\ge |X'|n^{r-2-\frac{1}{s-1} -\delta}\ge n^{ -  \varepsilon -\delta}|X'||\mathcal B|$.
Since $|X'|\ge |X|/r\gg n^{\varepsilon +\delta}$, applying Lemma~\ref{lem: bipartite average} (2),
there are at least $(n^{-\varepsilon -\delta}|X'|)^s/(3s!)\geq n^{-s\varepsilon-s\delta }|X|^s/(3s!r^s)$ many $s$-sets $S\subseteq X'\subseteq X$ such that the common neighbor of $S$ in $H$ is at least $n^{-s\varepsilon- s\delta }|\mathcal B|/3.$
In other words,
\begin{equation}\label{equ:cd(S,v1)}
cd_{\mathcal G}(S|v_{ 1})\ge  n^{ -s\varepsilon-s\delta}|\mathcal B|/3 \ge n^{r-2-\frac{1}{s-1} -s\varepsilon-(s+1)\delta}/3.
\end{equation}
It suffices to show that $v_{1}$ indeed is a root of such $S$ if $d_{\mathcal G}(S|v_{ 1})\ge n^{r-2-\frac{1}{s-1} -s\varepsilon-(s+1)\delta}/3$.
Write $S=\{w_1,\cdots,w_s\}$ and let $R$ be the set of all roots of $S$.
We know $|R|\leq rt$ from \eqref{equ:roots}.
If $v_1\notin R$, then we can obtain
$$cd_{\mathcal G}(S|v_{1})\le \sum_{x\in R}d_\mathcal{G}(\{w_1,v_1,x\})\le |R|n^{r-3-\frac{1}{s-1} +\varepsilon} \le rt\cdot n^{r-3-\frac{1}{s-1} +\varepsilon}<n^{r-2-\frac{1}{s-1} -s\varepsilon-(s+1)\delta}/3,$$
where the first inequality holds because every set in $\CN_{\mathcal G}(S)$ containing $v_1$ must also contain $w_1$ and a root in $R$,
the second inequality follows from Lemma~\ref{lem: set degree}, and the last inequality holds because $6(s+1)(\varepsilon + \delta)\le 1$.
This contradicts \eqref{equ:cd(S,v1)}. We have finished the proof for the first conclusion.

Next we prove the second conclusion.
Without loss of generality, we assume $j=3$.
Fix a vertex $v\in X\subseteq V_2$.
We define a bipartite graph $H=(A,\mathcal B)$ similarly as above,
where $A:=A(v)$, and $\mathcal B:= N_{\mathcal G}(\{v,v_{1}\})$.
Let $a\in A$ and $B\in \mathcal B$ form an edge in $H$ if and only if $\{v_{1},a\}\cup B\in E(\mathcal G)$.
For each $a \in A$, we have $N_H(a) = \mathcal{B} \cap N_\mathcal{G}(\{v_1,a\})$,
and by \eqref{equ:d(u,v1)}, each $a$ has at least $n^{r-2-\frac{1}{s-1}- \delta}$ neighbors in $\mathcal{B}$.
So $e(H)\ge |A|n^{r-2-\frac{1}{s-1} -\delta} $.
Let $\mathcal R$ be the set of all $(r-3)$-tuples $R\subseteq V_4\cup \cdots\cup V_{r}$ of $\mathcal G$.
For $R\in \mathcal R$, define $\mathcal B_R$ as the set of $(r-2)$-tuples in $\mathcal B$ containing $R$.
So $\mathcal B$ has a partition $\cup_{R\in \mathcal R}\mathcal B_R$.
As $|\mathcal R|\le n^{r-3}$, by averaging there exists some $R^*\in \mathcal R$ such that
$e_H(A,\mathcal B_{R^*})\ge e(H)/n^{r-3}\ge |A|n^{1-\frac{1}{s-1}-\delta}$.
Let $Z:= N_{\mathcal G}(\{v,v_1\}\cup R^*)\subseteq V_3$.
Clearly there is a bijection between $Z$ and $\mathcal B_{R^*}=\{\{z\}\cup R^*| z\in Z\}$.
So we can identify the bipartite subgraph $(A, \mathcal B_{R^*})$ of $H$ as $H'=(A,Z)$,
where $H'$ is defined such that $az\in E(H')$ if and only if $\{v_1,a,z\}\cup R^*\in E(\mathcal{G})$ for $a\in A$ and $z\in Z$.
Then we have $$n^{1-\frac{1}{s-1} + \varepsilon}\geq |Z|\geq e(H')/|A|= e_H(A,\mathcal B_{R^*})/|A|\geq n^{1-\frac{1}{s-1}-\delta},$$
where the first inequality holds due to Lemma~\ref{lem: set degree}.
This further shows that $e(H') \ge |A|n^{1-\frac{1}{s-1}  -\delta} \ge n^{-\varepsilon -\delta}|A||Z|$.
Recalling that $|A|=|A(v)|\geq d_{\mathcal G}(T)/r$, and combining it with $d_{\mathcal G}(T)\geq n^{1-\frac{1}{s-1}- \varepsilon}/\alpha$ and the given restrictions on $\varepsilon, \delta, \alpha$,
we have
$n^{-\varepsilon -\delta}|A|\geq n^{1-\frac{1}{s-1}- 2\varepsilon-\delta}/(\alpha r)\gg s$.
Applying Lemma~\ref{lem: bipartite average}~(1) to $H'=(A,Z)$,
there exists a subset $Y\subseteq A\subseteq N_{\mathcal G}(T)$ of size
$$|Y|\geq n^{-\varepsilon -\delta}|A|/2\ge n^{-\varepsilon -\delta}d_{\mathcal G}(T) /2r\ge n^{1-\frac{1}{s-1}  - 2 \varepsilon -\delta}/2r\alpha$$
such that every $y\in Y$ has
$|N_{\mathcal G}(\{v_1, y\}\cup R^*)\cap Z|=|N_{H'}(y)\cap Z |\ge  n^{ -  \varepsilon -\delta}|Z |/2\ge n^{1-\frac{1}{s-1}  -  \varepsilon-2\delta}/2$.
\end{proof}

Finally, we are ready to show Lemma~\ref{lem: 1root2root3}.

\begin{proof}[\bf Proof of Lemma~\ref{lem: 1root2root3}]
Let $\varepsilon, \delta, \alpha$ be constants and $\mathcal H_1\subseteq\mathcal H\subseteq \mathcal G_1 \subseteq\mathcal G$ be the sequence of
$(\varepsilon, \alpha)$-regular $K_{s,t}^{(r)}$-free hypergraphs given by the statement.
Without loss of generality, we may assume that $V_1 \xrightarrow[\delta]{\mathcal  H_1, \mathcal H}  V_2 \xrightarrow[\delta]{\mathcal  G_1,\mathcal G } V_k$ holds for some $k\neq 2$. We aim to show that $k=1$.

Suppose for a contradiction that $k\notin \{1,2\}$.
Let $T\in \mathcal T_2(\mathcal H_1)$ be an $(r-1)$-tuple with $T\cap V_1=\{v_1\}$.
Since $ V_1 \xrightarrow[\delta]{\mathcal  H_1, \mathcal H } V_2 $,
for any $x\in N_{\mathcal H_1}(T)$, $(T;x)$ is $\delta$-dense on $v_1$ in $\mathcal H$.
Using Lemma~\ref{lem: reduce to bipartite}~(2),
there exist subsets $Y\subseteq N_{\mathcal H}(T)\subseteq V_2$, $Z\subseteq V_k$ and an $(r-3)$-tuple $R\subseteq V(\mathcal G)\backslash (V_1\cup V_2\cup V_k)$ such that
\begin{itemize}
\item $|Y|\ge n^{1-\frac{1}{s-1} - 2 \varepsilon -\delta}/2r\alpha$,
\item  $n^{1-\frac{1}{s-1}  -\delta} \le |Z|\le n^{1-\frac{1}{s-1} + \varepsilon}$, and
\item for each $y_j\in Y$, if we let $T_j=\{v_1, y_j\}\cup R$ and $X_j=N_{\mathcal H}(T_j)\cap Z$, then $|X_j|\ge n^{1-\frac{1}{s-1}-\varepsilon-2\delta}/2$.
\end{itemize}
Let us count the number $m$ of pairs $(S,y)$ such that $y\in Y$ is a root of an $s$-set $S\subseteq Z$ in $\mathcal G$.
Since $Z$ has at most $|Z|^s$ different  $s$-sets, and each $s$-set has at most $rt$ root in $\mathcal G$, we have $m\le rt\cdot |Z|^s$.
Now consider a fixed vertex $y_j \in Y$ (so $y_j\in V_2$).
As $\mathcal H\subseteq \mathcal G_1$,
we see $T_j\in \mathcal T_k(\mathcal H)\subseteq \mathcal T_k(\mathcal G_1)$.
Since $V_2 \xrightarrow[\delta]{\mathcal  G_1,\mathcal G } V_k$,
by Definition~\ref{Def:delta-dense}, every $x\in X_j\subseteq N_{\mathcal H}(T_j)\subseteq N_{\mathcal G_1}(T_j)$ satisfies that
$(T_j;x)$ is $\delta$-dense on $y_j$ in $\mathcal G$.
Since $|X_j|\ge n^{1-\frac{1}{s-1}-\varepsilon-2\delta}/2\gg n^{\varepsilon+\delta}$,
using Lemma~\ref{lem: reduce to bipartite}~(1), we can derive that $X_j\subseteq Z$ contains at least $n^{- s\varepsilon-s\delta }|X_j|^s/(3s!r^s)$ different $s$-sets rooted on $y_j$ in $\mathcal G$.
This shows that $m\ge \sum_{y_j\in Y}n^{- s\varepsilon-s\delta }|X_j|^s/(3s!r^s).$
Putting everything together, we get
$$rt\cdot \left(n^{1-\frac{1}{s-1} +  \varepsilon }\right)^s \ge rt\cdot |Z|^s\ge m
\ge \frac{1}{2r\alpha\cdot 2^s\cdot 3s!r^s} \cdot n^{1-\frac{1}{s-1} - 2 \varepsilon -\delta}\cdot
n^{- s\varepsilon-s\delta } \cdot \left(n^{1-\frac{1}{s-1}-\varepsilon-2\delta}\right)^s.$$
Since $\alpha=o(n^\varepsilon)$,
this implies
\begin{equation}\label{equ:3s(eps+del)}
(3s+3)\varepsilon+(3s+1)\delta\ge 1-1/(s-1)+o(1), \mbox{ where } o(1)\to 0 \mbox{ as } n\to \infty.
\end{equation}
Since $s\ge 3$  and $6(s+1)(\varepsilon + \delta)\le 1$,
we have $(3s+3)\varepsilon+(3s+1)\delta < 3(s+1)(\varepsilon + \delta)\le 1/2 \le  1-1/(s-1)$,
which contradicts \eqref{equ:3s(eps+del)} as $n$ is sufficiently large.
The proof of this lemma is now complete.
\end{proof}

\section{Concluding remarks}
In this paper, we prove that for any odd $r\geq 3$ and any $s\geq 3$, there exists an $\varepsilon_s>0$ such that
$$\ex(n,K_{s,t}^{(r)})=O_{r,s,t} \left(n^{r-\frac{1}{s-1}- \varepsilon}\right).$$
It would be interesting to determine the optimal constant $\varepsilon_s$ for any odd $r\geq 3$.
It is also worth noting that Mubayi and Verstra\"ete \cite{MuVe04} conjectured that the Tur\'an number for 3-uniform hypergraphs satisfies
$\ex(n,K_{s,t}^{(3)})=\Theta_{s,t}\left(n^{3-\frac{2}{s}}\right)$ for any $t\geq s\geq 2$,
which is still open for $s\geq 3$.

As briefly discussed in Subsection~\ref{subsec:sketch},
the proofs of Lemmas \ref{lem: dense subgraph} and \ref{lem: 1root2root3} yield certain rich adjacency structures in dense $K_{s,t}^{(r)}$-free $r$-uniform hypergraphs for even $r\geq 4$.
There structures also align with the construction provided in Section 3 of \cite{BGJS23}.
These observations suggest that perhaps there exists a stability result for $K_{s,t}^{(r)}$ for even $r\geq 4$.

The asymptotics of $f_r(n)$ and $\ex(n,K_{2,2}^{(r)})$ remain intriguing open problems. 
We conclude this paper by mentioning two related conjectures. 
The first conjecture due to F\"uredi \cite{Fu84} states that $\ex(n,K_{2,2}^{(r)})=(1+o(1))\binom{n-1}{r-1}$ for any $r\geq 3$. 
Note that the lower bound $\binom{n-1}{r-1}$ can be achieved by the hypergraph star.
Despise significant progress made in \cite{MuVe04,PiVe09}, this conjecture remains open for any $r\geq 3$.
The second conjecture, posed by Mubayi (see Conjecture 6.2 in~\cite{Mu07}), suggests that the $f_r(n)$-problem is {\it stable} for $r\geq 4$.
This says that for any $r\geq 4$ and $\delta>0$, there exist $\epsilon>0$ and $n_0$ such that any $n$-vertex $r$-uniform hypergraph with at least $(1-\epsilon)\binom{n}{r-1}$ edges, which does not contain four distinct edges $A, B, C, D$ satisfying $A\cup B=C\cup D$ and $A\cap B=C\cap D=\emptyset$, must contain a vertex $v$ belonging to at least $(1-\delta)\binom{n}{r-1}$ edges.

\bigskip
\medskip

{\noindent\bf Acknowledgment.} The authors would like to thank Prof. Hao Huang for helpful discussions.

\bibliographystyle{unsrt}

\end{document}